\newtheorem{theorem}{Theorem}[section]
\newtheorem{lemma}[theorem]{Lemma}
\newtheorem{corollary}[theorem]{Corollary}
\newtheorem{proposition}[theorem]{Proposition}
\newtheorem{remark}[]{Remark}
\newtheorem{definition}[theorem]{Definition}
\numberwithin{equation}{section}
\newcommand{\ignore}[1]{{}}
\newcommand{\norm}[1]{\lVert #1 \rVert}
\newcommand{\TV}[1]{{\left\vert\kern-0.25ex\left\vert\kern-0.25ex\left\vert #1 
    \right\vert\kern-0.25ex\right\vert\kern-0.25ex\right\vert}}
\newcommand{\m}[1]{\mathbb{#1}}
\def\st{, \,}
\renewcommand\Re{\operatorname{Re}}
\renewcommand\Im{\operatorname{Im}}
\def\a{\alpha}
\def\b{\beta}
\def\g{\gamma}
\def\d{\delta}
\def\t{\theta}
\def\k{\kappa}
\def\vare{\varepsilon}
\def\hcap{{\rm hcap}}
\date{\today}
\begin{document}
\title{Remarks on Loewner Chains Driven by Finite Variation Functions}

\author{Atul Shekhar%
  \thanks{KTH, Lindstedtsv\"agen 25,
         Stockholm, Sweden.
    Email: \url{atuls@kth.se}}
\and Huy Tran
\thanks{TUB, Strasse des 17. Juni 136, 10623 Berlin, Germany.
    Email: \url{tranvohuy@gmail.com}} 
 \and Yilin Wang
\thanks{ETH, R\"amistrasse 101, 8092 Zurich, Switzerland.
    Email: \url{yilin.wang@math.ethz.ch}}
}

\maketitle
\begin{abstract}

To explore the relation between properties of Loewner chains and properties of their driving functions, we study Loewner chains driven by functions $U$ of finite total variation. Under a slow point condition, we show the existence of a simple trace $\gamma$ and establish the continuity of the map from $U$ to $\gamma$ with respect to the uniform topology on $\gamma$ and to the total variation topology on $U$. In the spirit of the work of Wong \cite{carto-wong} and Lind-Tran \cite{huy-lind}, we also 
obtain conditions on the driving function that ensures the trace to be continuously differentiable.
\end{abstract}

\noindent {\em Keywords:} Loewner differential equation, finite variation drivers, trace of Loewner chains, continuity of Loewner map.

\noindent {\em AMS 2010 Subject Classification:} 30C55, 34M99.


\section{Introduction and Results}

The Loewner's differential equation (abbreviated LDE) was introduced by K. Loewner in the context of the Bieberbach conjecture \cite{Loewner1923} where he studied univalent functions on the unit disc by approximating the image domain by slit domains. LDE turned out to be an instrumental tool in its solution which was eventually settled by L. de Branges \cite{DeBranges1985}, see also \cite{duren1977,hayman1994multivalent} for background. 
For those purposes, it was sufficient to consider cases where the slit is an analytical curve which is equivalent to the corresponding driving function being analytical, see \cite{EE01}. The development of the theory of Schramm-Loewner evolution in recent past years has prompted to consider driving functions which are not smooth and to understand the relation between properties of the slit (also called the trace) in terms of properties of its driving function. 

The LDE was initially written in the radial setting where the target point is in the interior of the domain. There exists an equivalent chordal version where the target point is on the boundary of the domain. In the present article, we choose to work with the chordal case, but everything could easily be rephrased in the radial setting. Let us briefly recall some basics of chordal Loewner's theory in the upper half plane $\mathbb {H}:=\{ z | z \in \mathbb{C}, \Im(z) > 0\} $. 

Let $\gamma$ be a continuous injective curve from the compact time interval $[0,T]$ into $\mathbb{H}\cup \{0\}$ with $\g(0) =0$. LDE provides a way to encode the curve $\g$ via a real valued function $U$ which will be called the \textit{driving function} or simply, the \textit{driver} of $\g$. 
Let us first explain how to define the driver $U$ when one knows $\g$. Note that for each $t\geq 0$, $H_t := \mathbb{H}\setminus \g[0,t]$ is a simply connected domain, and there exists a unique conformal map $g_t$ from the slit domain $H_t$ onto $\mathbb {H}$ satisfying the so called \textit{hydrodynamic normalization} given by $\lim_{z \to \infty} (g_t(z) - z  ) =  0$.  
The map $g_t$ will be referred to as the mapping-out function of the set $K_t := \gamma[0,t]$. Further expanding $g_t$ at infinity, one gets the existence of a non-negative constant $b_t$ depending on $K_t $ such that 
\[g_t(z) = z + b_t/z + O(1/|z|^2).\]

The constant $b_t$ is called the half-plane capacity of $K_t$ and is denoted by $b_t = \hcap(K_t)$. It is shown that $t \mapsto \hcap (K_t)$ is continuously increasing. Thus, it is possible to choose a parametrization of $\gamma$ so that $\hcap(K_t) = 2t$ for all $t \in [0,T]$. The mapping-out function $g_t$ also admits a continuous extension to the boundary point $\gamma_t $ of the domain $H_t$. The driver $U$ is then defined by $U_t := g_t(\gamma_t)$ which can be  shown to be a continuous real valued function.  The significance of the driver $U$ comes from the fact that it describes the evolution of the conformal maps $g_t(z)$ in variable $t$ via LDE given by  
\begin{equation}\label{LDE} 
\dot{g}_t(z) = \frac{2}{g_t(z)- U_t}, \hspace{2mm} g_0(z)= z. 
\end{equation} 

In fact, one can also recover the curve $\gamma$ from $U$ as follows. For each $z\in \overline{\mathbb{H}}\setminus \{0\}$, let $[0,T_z)$ with $T_z \in (0, \infty]$ denote the maximal interval of existence of the unique solution to equation \eqref{LDE}. Also define $T_0 = 0$. Then 
\[ \gamma[0,t] = \{z \in \overline{\mathbb{H}} \st T(z) \leq t\}.\]

The procedure described above can also be naturally reversed. Given any continuous real valued curve $U$ with $U_0=0$, written $U \in C_0[0,T]$ hereafter, define $g_t(z)$ for $z \in \overline{\mathbb{H}}\setminus \{0\}$ to be the solution of \eqref{LDE}. Let $T_z$ for $z\in \overline{\mathbb{H}}$ be similarly defined as above. Then 
\[ K_t := \{z \in \overline{\mathbb{H}} \st T(z) \leq t\}\]
defines an increasing family of compact sets in $\overline{\mathbb{H}}$. The family $K= \{K_t\}_{t \in [0,T]}$ is called the \textit{Loewner chain driven by} $U$. As in the previous case, $H_t :=\mathbb{H}\setminus K_t$ is simply connected and $g_t$ is the unique conformal map from $H_t$ onto $\mathbb{H}$ satisfying the hydrodynamic normalization. The Loewner chain $K$ also satisfies $\hcap(K_t)=2t$ and the so-called \textit{conformal local growth property} meaning that the radius of $g_{t}(K_{t+s} \setminus K_t)$
 tends to $0$ as $s \to 0+$ uniformly with respect to $t$. However, it is important to stress that, in general, $K_t$ may not be locally connected, and in full generality, it cannot always be written as the image $\gamma[0,t]$ for a curve $\gamma$. Even if this is the case, the curve $\gamma$ may be non-simple and $K_t$ has to be described by filling the loops in the image $\gamma[0,t]$. We say that the Loewner chain $K$ driven by $U$ admits a trace, or synonymously, $U$ generates a \textit{trace} if there exists a  curve $\g:[0,T]\to \overline{\mathbb{H}}$ such $\gamma_0=0$ and for all $t \in [0,T]$, $H_t$ is the unbounded component of $\mathbb{H}\setminus \gamma[0,t]$. We then call $\gamma$ the trace of the Loewner chain $K$. There are  examples where $K$ does not admit a trace. These cases are of interest too but not the topic of this article. The following questions arise naturally in this context:  

\begin{enumerate} 
\item For what classes of drivers $\mathcal{U}\subset C_0[0,T]$ does the Loewner chain $K$ driven by $U\in \mathcal{U}$ admit a simple trace?
\vspace{2mm}
\item What continuity properties does the map $\Psi$ which maps $U$ to $\gamma$ satisfy on $\mathcal{U}$?
\vspace{2mm}
\item How does the regularity of the trace $\gamma$ relate to properties of the driver $U\in \mathcal{U}$?
\vspace{2mm}
\end{enumerate}

Let us list some answers to the above questions. Marshall-Rohde \cite{RM} and Lind \cite{Lind} have shown that the simple trace exists if $U$ is $1/2$-H\"older with $\sigma_0 = \|U\|_{1/2} <4$. In fact in this case, $U$ generates a quasi-slit; also see \cite{rtz} for a different proof of this theorem. The condition $\sigma_0< 4$ is sharp, and it was shown in \cite{LMR10} and \cite{RM} that there exists a driver $U_t \sim 4\sqrt{1-t}$ as $t \to 1-$,  which does not generate a trace. For $\sigma < 4$, if $\mathcal{U}_{\sigma} := \{U\in C_0[0,T] \st \|U\|_{1/2} \leq \sigma\} $, then the continuity of $\Psi : \mathcal{U}_{\sigma} \to C([0,T], \overline{\mathbb{H}})$ was established in \cite{LMR10} w.r.t. the
 uniform topologies on $\mathcal{U}_{\sigma}$ and $C([0,T], \overline{\mathbb{H}})$. A similar result was obtained in \cite{NikeSun-Scott} without assuming $\|U\|_{1/2}<4$ but instead assuming some technical conditions on limiting trace curve $\gamma$. Some stronger continuity results were obtained in \cite{semimartingale} under the assumption of $U$ being of finite energy, i.e. $\dot{U}$ is square integrable.


 Sufficient conditions on $U$ to ensure certain regularity of $\g$ was obtained by Wong \cite{carto-wong} and Lind-Tran \cite{huy-lind} where it was shown that $ t \mapsto \gamma_{t^2}$ is a $C^{\alpha + 1/2}$ curve when $U$ is $C^{\alpha}$ for $\alpha > 1/2$ (there is a little caveat when $\alpha - 1/2 \in \m N$ though). When $U\in \mathcal{U}_{\sigma}$ for $\sigma <4 $, it was shown in \cite{rtz} that $\gamma$ is $\eta$-H\"older where $\eta$ depends only on the $\sigma$. When $U$ is of finite energy, it was shown in \cite{semimartingale} that $t\mapsto \gamma_{t^2}$ is a Lipschitz curve, and thus $\gamma$ is of finite total variation.

In this article, we prove some further results in the context of the above raised questions. We will more precisely focus on drivers with  finite total variation. Recall that the \textit{total variation} $\TV{U}_{I}$ of a function $U$ on a closed interval $I$  is the supremum of the sum of the absolute values of the increments of $U$ over all partitions  of $I$. We will write $\TV{U}_t := \TV{U}_{[0,t]}$. 
Let us define the following two conditions: 
\begin{align*}
  \mbox{(C1):}& \hspace{2mm} \mbox{For all $t >0$,} 
\hspace{2mm} \limsup_{s\to 0+} \frac{ \TV{U}_{[t-s, t ]}  }{\sqrt{s}} < 2.\\
 \mbox{(C2):} &\hspace{2mm} \int_{0+}^{\vare \wedge t} \frac{1}{\sqrt{r}} d \TV U_{t-r} \to 0 \hspace{2mm} \mbox{uniformly for $t \in (0,T]$ as $\vare \to 0$}. 
 \end{align*}

Define the subset $BV_{LR}[0,T] \subset C_0[0,T]$ by (LR stands for ``Locally Regular"),
\[ BV_{LR}[0,T] := \bigl\{ U \in C_0[0,T] ~~ \mbox{ s.t. } ~~\TV{U}_T  <\infty ~~\mbox{  and (C1) holds}\bigr\}. \]

We also equip $BV_{LR}[0,T]$ with the metric $d$ defined by $d(U, V) := \TV{U-V}_T$ for $U, V \in BV_{LR}[0,T]$. Note however that $BV_{LR}[0,T]$ is not a complete metric space. The space $C([0,T], \overline{\mathbb{H}})$ of continuous $\overline{\mathbb{H}}$-valued curves is equipped with the supremum norm hereafter.\\

Our first main result is the following theorem.

\begin{theorem}\label{bvresult}
For each $U \in BV_{LR}[0,T]$, the Loewner chain driven by $U$ admits a simple trace $\gamma$ such that $\g_t \in \mathbb{H}$ for all $t >0$.
\end{theorem}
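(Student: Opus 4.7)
The plan is to realize $\gamma$ as the uniform limit of traces of smooth approximations and to use (C1) to supply a quantitative lower bound on $\Im(\gamma^n_t)$ uniform in $n$. Let $U^n := U * \psi_n$ be the mollification of $U$ with a $C^\infty$ bump $\psi_n$ of radius $\vare_n \downarrow 0$. Then $U^n \in C^\infty$, $U^n \to U$ in both the uniform and total variation norms, and $\TV{U^n}_{[a,b]} \leq \TV{U}_{[a-\vare_n, b+\vare_n]}$. By (C1), for each $t \in (0,T]$ there exist $\eta_t, s_t > 0$ with $\TV{U}_{[t-s,t]} \leq (2-\eta_t)\sqrt{s}$ on $(0,s_t]$, which transfers to $\TV{U^n}_{[t-s,t]} \leq (2-\eta_t/2)\sqrt{s}$ on $(0, s_t/2]$ for $n$ large. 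Each smooth $U^n$ generates a $C^\infty$ simple trace $\gamma^n$ with $\gamma^n_t \in \mathbb{H}$ for every $t > 0$, as is classical for smooth drivers.

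The core of the proof is a uniform tip estimate $\Im(\gamma^n_t) \geq c_t \sqrt{t}$ on compact subsets of $(0,T]$. Fix $t$ and run the reverse Loewner flow for the time-reversed driver $\tilde U^n_r := U^n_{t-r}$, $r \in [0,t]$, started from $z = U^n_t + iy$, $y > 0$; the limit as $y \to 0^+$ of $f_t(U^n_t + iy)$ equals $\gamma^n_t$. Setting $R_r := f_r - \tilde U^n_r$ and $Q_r := \Im R_r$, the integral form of the ODE gives
\begin{equation*}
|R_r| \leq y + \TV{U^n}_{[t-r,t]} + 2\int_0^r \frac{du}{|R_u|},
\end{equation*}
while $Q_r$ is monotone increasing with $\dot Q_r = 2 Q_r / |R_r|^2$. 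The extremal case of a constant driver produces $|R_r| = 2\sqrt{r}$, which is precisely why the constant $2$ appears in (C1). A bootstrap on $\rho_r := |R_r|/\sqrt{r}$, using $\TV{U^n}_{[t-r,t]}/\sqrt{r} \leq 2 - \eta_t/2$, closes to give $\rho_r \leq C$ with $C$ strictly below the critical threshold on $(0, s_t/2]$; integrating the ODE for $Q_r$ then yields $Q_r \geq c_t\sqrt{r}$. Sending $y \to 0^+$ and taking $r = t$ delivers the desired bound.

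With tips uniformly bounded away from $\R$ on each $[\delta, T]$, standard Koebe distortion estimates applied to $(g^n_t)^{-1}$ produce equicontinuity of $\{\gamma^n\}$ on $[\delta, T]$; a diagonal argument on $\delta \to 0$ together with Arzel\`a-Ascoli extract a uniformly convergent subsequence with limit $\gamma$. Continuity of the Loewner correspondence on drivers whose traces avoid $\R$ then identifies $\gamma$ as the trace of the chain driven by $U$, and the tip estimate yields $\Im(\gamma_t) \geq c_t\sqrt{t} > 0$ for $t > 0$. Simpleness of $\gamma$ follows from a uniform quasi-slit estimate derived from the same reverse-flow analysis: applied at a prospective self-intersection time $t_0$, the strict inequality in (C1) prevents $\gamma$ from reapproaching itself or $\R$ there. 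The principal obstacle will be the bootstrap closure in the tip estimate: the sharp constant $2$ in (C1) forces a self-improving inequality on $\rho_r$ rather than a standard Gr\"onwall argument, and the delicate balance between the TV growth $(2-\eta_t)\sqrt{r}$ and the extremal vertical rate $2\sqrt{r}$ must be exploited precisely.
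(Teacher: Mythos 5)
Your approach — mollify, prove a uniform tip estimate for the smooth approximations, pass to a limit by Arzel\`a--Ascoli, then identify the limit — differs fundamentally from the paper's, which works \emph{directly} with the reverse Loewner flow started from the singularity: the paper squares the flow to $\phi_s = h_s^2$ satisfying $d\phi_s = 2\sqrt{\phi_s}\,d\beta_s - 4\,ds$, gives meaning to this ODE at $w=0$ via continuous branch square roots, proves existence and uniqueness of the singular solution (Proposition \ref{existenceofphi}), and then shows $\phi_s(-y^2)\to\phi_s(0)$ and continuity of $t\mapsto\phi_t^t$ to verify the Rohde--Schramm criterion. No approximation by nice drivers ever enters. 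Your route is in principle legitimate, but it has two genuine gaps.

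First, the claimed transfer of (C1) to the mollifications $U^n=U*\psi_n$ is false as stated. One has $\TV{U^n}_{[t-s,t]}\leq\TV{U}_{[t-s-\vare_n,\,t+\vare_n]} = \TV{U}_{[t-s-\vare_n,t]}+\TV{U}_{[t,t+\vare_n]}$, and (C1) is a purely \emph{left-sided} condition at $t$: it gives no control on $\TV{U}_{[t,t+\vare_n]}$. That right-hand piece only needs to tend to $0$ as $\vare_n\to 0$, at no particular rate, so for $s$ comparable to or smaller than $\vare_n$ the ratio $\TV{U^n}_{[t-s,t]}/\sqrt{s}$ can blow up; your assertion that the bound $(2-\eta_t/2)\sqrt{s}$ holds on $(0,s_t/2]$ for $n$ large does not follow. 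Second, and more seriously, the step ``continuity of the Loewner correspondence on drivers whose traces avoid $\R$ then identifies $\gamma$ as the trace of $U$'' invokes a theorem you do not have. The known statements of this type are the Lind--Marshall--Rohde result in \cite{LMR10}, which requires $\|U\|_{1/2}<4$ (precisely what a $BV_{LR}$ driver need not satisfy, as the examples in the paper show), and Sheffield--Sun \cite{NikeSun-Scott}, which requires a priori conditions on the limiting trace curve — which is what you are trying to produce. In fact, the paper's own continuity theorem (Theorem \ref{continuity}) is proved \emph{after} Theorem \ref{bvresult} precisely because the existence and characterization of the limiting trace is needed first; using such continuity to establish existence is, in this framework, circular. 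To close this gap you would need to argue from scratch that a subsequential uniform limit $\gamma$ of $\gamma^n$ is generated by $U$ (e.g.\ via Carath\'eodory convergence of $\m H\setminus\gamma^n[0,t]$ together with $g_t^n\to g_t$), which is nontrivial and not sketched. The bootstrap on $\rho_r$ and the ``uniform quasi-slit estimate'' for simpleness are also left as stubs; the paper's simpleness argument (shift the time origin, apply the $\gamma_t\in\m H$ conclusion to $\tilde U_t=U_{t+s}-U_s$) is more elementary and you should consider adopting it.
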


Our proof is based on a result due to Rohde and Schramm \cite {BasicSLE} which states that the trace exists if and only if 
\begin{equation}\label{cond_RS}
\gamma_t := \lim\limits_{y \rightarrow 0+} f_t (iy + U_t)
\end{equation} 
exists and is continuous in $t$, where $f_t = g_t^{-1}$. If so, the curve $\gamma$ is the trace generated by $U$. We will verify the conditions of \eqref{cond_RS} by providing a candidate for the curve $\gamma$ by uniquely solving reverse time LDE starting from a singularity, see section \ref{Complex-Bessel-Equations} for details. As a result of this, we also obtain the following continuity result.

\begin{theorem} \label{continuity} 
The map $\Psi: (BV_{LR} [0,T],\TV{\cdot}) \to (C([0,T], \overline{\mathbb{H}}),  \|\cdot\|_{\infty})$ defined by $\Psi(U)= \gamma$ is continuous.
\end{theorem}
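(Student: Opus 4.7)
The plan is to leverage the construction of the trace used in the proof of Theorem \ref{bvresult}: the point $\gamma_t$ is produced as the endpoint of the unique solution of the reverse-time Loewner equation started from the singular point $U_t$ on the real line. Writing $\gamma_t(U)$ for this candidate, the continuity of $\Psi$ becomes a stability statement for this singular initial value problem under total-variation perturbations of $U$.

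Fix a sequence $U^n \to U$ in $(BV_{LR}[0,T], d)$, and for each $t \in (0,T]$ and small $\delta > 0$, consider the reverse Loewner flows $Y^n_s = Y^n_s(t)$ and $Y_s = Y_s(t)$ for $s \in [0,t]$, driven respectively by $U^n$ and $U$, both started from the singularity at $s=0$. Away from the initial singularity one has
\[
\partial_s (Y^n_s - Y_s) = -\frac{2(Y^n_s - Y_s)}{(Y^n_s - U^n_{t-s})(Y_s - U_{t-s})} + \frac{2(U^n_{t-s} - U_{t-s})}{(Y^n_s - U^n_{t-s})(Y_s - U_{t-s})},
\]
so a Gronwall argument on $[\delta, t]$, using that $\Im Y_s$ and $\Im Y^n_s$ are bounded below on this range, yields an estimate of the form
\[
|Y^n_t - Y_t| \leq C(\delta)\, |Y^n_\delta - Y_\delta| + C(\delta)\, \|U^n - U\|_{\infty,[0,t]} \leq C(\delta)\, |Y^n_\delta - Y_\delta| + C(\delta)\, \TV{U^n - U}_T.
\]

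Next, I need a modulus $\omega(\delta) \to 0$ such that $|Y_s - U_t|, |Y^n_s - U^n_t| \leq \omega(\delta)$ for all $s \in [0,\delta]$ and all $t \in (0,T]$, uniformly in $n$. Condition (C2) is tailor-made for this: the reverse flow started at the singularity satisfies a differential inequality whose integrated form is controlled by $\int_0^\delta r^{-1/2} \, d\TV{U}_{t-r}$, and this is exactly what (C2) forces to vanish uniformly in $t$ as $\delta \to 0$. Combined with the Gronwall bound above, after choosing $\delta$ small so that $|Y^n_\delta - Y_\delta| \leq 2\omega(\delta)$ is uniformly small in $n$ and then letting $n \to \infty$, one obtains pointwise convergence $\gamma^n_t \to \gamma_t$; since every estimate is uniform in $t \in [0,T]$, this upgrades to uniform convergence, i.e., $\Psi(U^n) \to \Psi(U)$ in $C([0,T], \overline{\mathbb{H}})$.

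The step I expect to be the main obstacle is the uniform transfer of the (C2)-type modulus along the sequence $\{U^n\}$. Since $BV_{LR}[0,T]$ is not complete under $d$, there is no a priori common (C2)-modulus, and the initial-segment estimate $|Y^n_s - U^n_t| \leq \omega(\delta)$ could a priori fail to hold uniformly in $n$. The remedy should be to exploit $\TV{U^n - U}_T \to 0$ more sharply than its consequence $\|U^n - U\|_\infty \to 0$: convergence in total variation implies weak convergence of the Lebesgue-Stieltjes measures $d\TV{U^n}$ to $d\TV{U}$ in a dominated sense, which, via a Helly-selection or dominated-convergence argument, should allow one to produce a single modulus $\omega$ controlling $\int_0^\delta r^{-1/2}\, d\TV{U^n}_{t-r}$ uniformly in both $n$ and $t$. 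Establishing this uniformity rigorously is the only delicate ingredient; the remainder of the argument is the routine ODE-stability machinery outlined above.
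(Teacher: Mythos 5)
Your high-level strategy---recast the problem as a stability estimate for the reverse-time Loewner flow started from the singularity, control the initial segment uniformly, and run a Gronwall estimate away from $s=0$---is a natural idea, but it has a genuine gap: you invoke condition (C2) to supply the uniform initial-segment modulus $\omega(\delta)$, and (C2) is \emph{not} an assumption of Theorem \ref{continuity}. The only hypothesis on the limiting driver is $U \in BV_{LR}[0,T]$, i.e.\ finite total variation together with (C1), which is merely a pointwise-in-$t$ $\limsup$ condition: the threshold $s_0(t)$ and constant $c(t)<2$ are allowed to degenerate as $t$ varies, with no uniformity whatsoever. Condition (C2) is strictly stronger (it is what yields the uniform $1/2$-H\"older smallness in \eqref{weak_cond}) and is reserved for the $C^1$ regularity result, Theorem \ref{smoothness}. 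Consequently the Gronwall constant $C(\delta)$ in your estimate is actually $C(\delta,t)$, through the lower bound on $\Im Y_s$ near $s=0$ which needs (C1) at the specific $t$, and there is no uniform way to send $\delta\to 0$. The obstacle you flag---lack of a common (C2)-modulus along the sequence $\{U^n\}$---is secondary: even for the single limit $U$ the needed uniformity over $t$ is unavailable under (C1) alone, and no sharpening of $\TV{U^n-U}_T\to 0$ can manufacture it.

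The paper avoids ever needing (C1) uniformly. It works with the squared flow $\phi_s^{n,t} = (h_s^{n,t})^2$ and replaces the global Gronwall bound with a local compactness-and-uniqueness argument: total-variation bounds and equicontinuity of $s\mapsto\TV{U^n}_s$ (which \emph{do} follow uniformly from $\TV{U^n-U}_T\to 0$) give, via Arzel\`a--Ascoli and Lemma \ref{limitbranch}, convergent subsequences of the rescaled curves $\alpha^{n,t}_s=\phi^{n,t}_{st/t_0}$ as $(n,t)\to(\infty,t_0)$; Lemma \ref{analysis-II} (a Portmanteau-type statement---exactly the ``weak convergence of Lebesgue--Stieltjes measures'' you gesture at) passes this to the limit in the integral equation; and the subsequential limit is identified as $\phi^{t_0}$ by the uniqueness of Proposition \ref{existenceofphi}, which uses (C1) \emph{only at the single time $t_0$}. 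Full convergence then comes from Lemma \ref{analysis} for double-indexed sequences and a finite subcover of $[0,T]$. So your weak-convergence idea is pointed in the right direction, but it should be used to pass to the limit in the integral equation for a subsequential limit, not to force a uniform-in-$t$ singular estimate; and if one insists on a Gronwall-type bound, it would at least need to be localised at each $t_0$ and carried out on $\phi=h^2$ rather than on $h$ itself, since (as the remark after Proposition \ref{continuity in t} warns) $\m H$ is open and not preserved under the uniform limits the argument requires.
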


In fact, we prove Theorem \ref{continuity} under a slightly weaker condition. See section \ref{proof of continuity} for details. Finally, we prove the following result on regularity of the trace $\gamma$.

\begin{theorem}\label{smoothness}
Let $U \in BV_{LR}[0,T]$ such that \mbox{(C2)} holds. Then the curve $t \mapsto \gamma_{t^2}$ is continuously differentiable.
\end{theorem}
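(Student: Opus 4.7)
The plan is to build on the reverse Loewner-flow analysis underlying the proof of Theorem \ref{bvresult}. Setting $\eta(t):=\gamma_{t^2}$ and fixing $t_0=t^2>0$, the Loewner restart at time $t_0$ yields the key identity
\begin{equation*}
\gamma_{t_0+h}=f_{t_0}\bigl(U_{t_0}+\xi^{(t_0)}_h\bigr)\qquad(h>0),
\end{equation*}
where $\xi^{(t_0)}_h$ is the trace at time $h$ of the Loewner chain driven by the shifted driver $U^{(t_0)}_s:=U_{t_0+s}-U_{t_0}$. Since $U^{(t_0)}$ still satisfies (C1) and (C2), Theorem \ref{bvresult} produces $\xi^{(t_0)}$, and strengthening the Bessel argument of its proof under (C2) yields the uniform expansion
\begin{equation*}
\xi^{(t_0)}_h=2i\sqrt{h}+\rho(t_0,h),\qquad \rho(t_0,h)/\sqrt{h}\to 0\ \text{as}\ h\to 0+,\ \text{uniformly in}\ t_0\in[0,T-h].
\end{equation*}
Uniformity is precisely what (C2) supplies: it controls the forcing $|U_{t_0+h}-U_{t_0}|\leq\TV{U}_{[t_0,t_0+h]}=o(\sqrt{h})$ of the singular integral equation satisfied by $\xi^{(t_0)}_\cdot-2i\sqrt{\cdot}$.

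The second step is the local quadratic expansion of $f_{t_0}$ at $U_{t_0}$. Since $U_{t_0}$ is the preimage of the tip of the simple slit $\gamma[0,t_0]$, conformal unfolding via $w\mapsto w^2$ turns $f_{t_0}(U_{t_0}+\cdot)-\gamma_{t_0}$ into a univalent holomorphic map of an $\mathbb{H}$-neighborhood of $0$, giving
\begin{equation*}
f_{t_0}(U_{t_0}+w)=\gamma_{t_0}+c_2(t_0)\,w^2+o(w^2),\qquad w\to 0,\ w\in\mathbb{H},
\end{equation*}
where $c_2(t_0)$ is the (nonzero) leading Taylor coefficient, computable from the second $w$-variation of the reverse flow at the singular initial condition. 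Substituting the uniform expansion of $\xi^{(t_0)}_h$ gives
\begin{equation*}
\gamma_{t_0+h}-\gamma_{t_0}=c_2(t_0)\bigl(2i\sqrt{h}+\rho\bigr)^{\!2}+o(h)=-4\,c_2(t_0)\,h+o(h),
\end{equation*}
so $\gamma$ is differentiable at each $t_0>0$ with $\gamma'(t_0)=-4c_2(t_0)$, and therefore $\eta'(t)=-8\,t\,c_2(t^2)$ for $t>0$.

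It remains to show that $\eta'$ extends continuously to $[0,\sqrt T]$. Continuity of $t_0\mapsto c_2(t_0)$ for $t_0>0$ follows by differentiating the reverse Loewner ODE twice in the initial condition and applying Gronwall-type perturbation estimates -- analogous to those in the proof of Theorem \ref{continuity} -- to the resulting linear variational equations. The continuous extension at $t=0$ reduces to showing $\sqrt{t_0}\,c_2(t_0)\to -i/4$ as $t_0\to 0+$, giving $\eta'(0)=2i$; this matches the pure Bessel case $U\equiv 0$, where $c_2(t_0)=-i/(4\sqrt{t_0})$ exactly (and $\eta(t)=2it$). The main obstacle will be precisely this last step: translating the integral condition (C2) into a uniform $o(1)$ bound on $\sqrt{t_0}\,c_2(t_0)+i/4$ as $t_0\to 0+$, for which the H\"older interpolation exploited by Wong and Lind-Tran in the $C^\alpha$, $\alpha>1/2$ setting is unavailable; instead one estimates the reverse-flow second variation directly via the weighted singular integral $\int_0^{t_0}r^{-1/2}\,d\TV{U}_{t_0-r}$ furnished by (C2).
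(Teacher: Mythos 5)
Your skeleton --- restart at $t_0$, expand the tip of the shifted trace $\xi^{(t_0)}_h$ to first order, then compose with a quadratic expansion of $f_{t_0}$ at $U_{t_0}$ --- is, after the substitution $\theta_t:=\phi^t_t(0)=\gamma_t^2$, exactly the flow-property decomposition the paper uses: Lemma \ref{flow} gives $\phi^{t_0+h}_{t_0+h}(0)=\phi^{t_0}_{t_0}\bigl(\phi^{t_0+h}_h(0)\bigr)$, your $\xi^{(t_0)}_h$ is $\sqrt{\phi^{t_0+h}_h(0)}$, and your $c_2(t_0)$ is, up to the factor $2\gamma_{t_0}$, the $w$-derivative $\partial_w\phi^{t_0}_{t_0}(w)\big|_{w=0}$. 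Your step (1) and the asserted uniformity in $t_0$ are also fine: they are the paper's computation $\phi^{t_0+h}_h(0)/h\to -4$, which follows directly from (C2) via the bound $|\Re\sqrt{\phi^{t_0+h}_r(0)}|\le\TV{\beta^{t_0+h}}_r$, $\Im\sqrt{\phi^{t_0+h}_r(0)}\le 2\sqrt{r}$.

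The genuine gap is step (2). You invoke ``conformal unfolding via $w\mapsto w^2$'' to assert that $f_{t_0}(U_{t_0}+\cdot)-\gamma_{t_0}$ has a leading quadratic Taylor coefficient $c_2(t_0)\neq 0$. That is not a general fact about slit mapping-out functions; $f_{t_0}$ is only holomorphic on $\mathbb{H}$, its boundary values on the two sides of $[U_{t_0}-\varepsilon,U_{t_0}+\varepsilon]$ are different points of $\gamma[0,t_0]$, and $f_{t_0}(U_{t_0}+\sqrt{w})$ does not extend analytically across $(0,\infty)$. For slits that merely satisfy (C1) the tip may have a corner or worse, and then the leading behaviour is a non-integer power, not $w^2$. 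In other words, the existence of $c_2(t_0)$ \emph{is} the theorem and cannot be gotten for free from conformal geometry. The paper establishes it by working with the squared reverse flow: after showing $\Im\sqrt{\phi^{t_0}_s(0)}\ge C\sqrt{s}$ uniformly from (C2) (estimate (\ref{lowerbound_im})), it considers the difference quotients $Z^{t_0,h}_s=\bigl(\phi^{t_0}_s(\phi^{t_0+h}_h(0))-\phi^{t_0}_s(0)\bigr)/\phi^{t_0+h}_h(0)$, proves they form a bounded equicontinuous family via a Gronwall estimate driven by $\int r^{-1/2}\,d\TV{\beta^{t_0}}_r$ (this is precisely where the full strength of (C2) enters), and identifies the limit $Z^{t_0}_s=\exp\bigl(\int_0^s(\phi^{t_0}_r(0))^{-1/2}\,d\beta^{t_0}_r\bigr)$ as the unique solution of the linearised equation. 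Your phrase ``differentiating the reverse Loewner ODE twice in the initial condition'' cannot replace this: at the singular initial condition $z=0$ of (\ref{crucialode}) the ODE coefficients blow up, and it is only after squaring to (\ref{w=0-new}) that the variational equation becomes tractable. Finally, your step (1)--(2) only produce a \emph{right} derivative; to conclude $C^1$ you still need the right-derivative lemma (Lemma \ref{lemma-from-lawler-book}), which the paper invokes and your write-up omits.
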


Let us make some comments about the above results. Condition (C1)  is reminiscent of a local $1/2$-H\"older condition. But, as we will see in the following, there are examples of functions in $BV_{LR} [0,T]$ that are not $1/2$-H\"older.
In fact, if the driver $U$ is non-decreasing, 
then (C1) is equivalent to saying that for all $t> 0$, there exist $s_0(t) > 0$ and $c(t) <2$ such that for $ s\in (0,s_0(t)]$, $ |U_t - U_{t-s}| \leq c(t) \sqrt{s}$.
Note that this conditions only imposes a $1/2$-H\"older type behaviour from the left at each $t$ with no uniformity assumption on $s_0(t)$ and $c(t)$ with respect to $t$ 
(even though $c(t)$ is assumed to be smaller than $2$, it can get arbitrarily close to $2$ as $t$ varies). 
In \cite[Theorem 1.2]{rtz}, a condition which is very similar to (C2) above assumes that for some constant $C_0$ small enough, 
\begin{equation*}
\sup_{t \in (0,T)} \int_0^{t} \sup_{r\in [s,t]} \frac{|U_r - U_s|}{(t-s)^{3/2}}ds \hspace{2mm} \leq C_0,
\end{equation*}
which ensures that the trace is the graph of a Lipschitz function. This can also be compared with Theorem \ref{smoothness} above.


Let us go through a list of some examples:
\begin{itemize}

\item If $U$ is in the Sobolev space $\mathcal{W}^{1,p}$ with $1\leq p<2$, then $U$ has finite variation and finite $(1-1/p)$-H\"older norm. However, this is not enough to say that $U$ satisfies the condition (C1).

\item On the contrary, if $U$ is in $ \mathcal{W}^{1,2}$, then $U$ is in $BV_{LR}$ and has small $1/2$-H\"older norm on intervals of small length. The latter together with the results in \cite{RM} imply the existence of the trace. This case was also treated in \cite{semimartingale} producing some additional properties of trace such that $t \mapsto \gamma(t^2)$ is a Lipschitz curve, and thus the trace has finite length; see Theorem $2$ in \cite{semimartingale}.

\item For any constant $c$, $U_t = c\sqrt{t}$ (note than when $|c| \geq 4$, then $\|U\|_{1/2} \geq 4$) can be easily seen to be an  element of $BV_{LR}$. 
Note that a scaling argument immediately shows that the trace in this case is a straight line in $\mathbb{H}$ starting at $0$ making an angle $\theta(c)$ with the real axis; also see \cite{knk} and \cite{LMR10} for exact computations. A function like $U_t = 4 \sqrt{t} - 2 \sqrt{t}\log(t)$ is differentiable on $(0,T]$ and clearly $U \in BV_{LR}$. However $\|U\|_{1/2} = + \infty$.

\end{itemize}

Even though the last two examples do not fall in the $\|U\|_{1/2} < 4$ regime, the only problem lies at time $t \to 0+$. One can also instead verify the existence of trace by looking at Loewner chain $\tilde{K}_{t}^{\epsilon}:= g_{\epsilon}(K_{t+ \epsilon} \setminus K_{\epsilon})$. 
Since $\tilde{K}^{\epsilon}$ is driven by $\tilde{U}^{\epsilon}_t = U_{t+ \epsilon} - U_{\epsilon}$ which is continuously differentiable for any $\epsilon >0$, it can be easily seen that $\tilde{K}^{\epsilon}$ admits a trace in $\m H$. 
Finally the conformal local growth property implies that $K$ also admits a trace. A key point to note here is that the
pathological behaviour from the right side of a point can be handled as above. Below we provide some
other examples where we have pathological behaviour from the left side of a point. 
As evident from conditions (C1) and (C2), our approach stresses to control the pathological behavior of $U$ from the left of a time $t>0$. Note that such
a distinction between left and right sides is due to the directional nature of the theory of Loewner
chains.

\begin{itemize}

%
%

\item \textbf{A monotone $BV_{LR}$ function with infinite $1/2$-H\"older norm :} Let $ c \in (0, 1)$ and $\alpha> 1/2$. Define a sequence by $s_0 = 0$ and $s_n = 1 - c^n$. Note that $s_n \uparrow 1$. 
Choose a strictly increasing sequence $x_n$ with $x_0 = 0$, $x_n \uparrow x$ for some $x$ such that $x - x_n \leq (1-s_n)^{\alpha}  = c^{n\alpha}$. Further choose $t_n \in  (s_n, s_{n+1})$ close enough to $s_n$ so that $(t_n - s_n)^{1/2- \epsilon} < x_{n+1} - x_n$. Now on the sequence $s_0 < t_0 < s_1< t_1 < s_2 < t_2 < ...$, define $U_{s_n} = x_n$, $U_{t_n} = x_n + (t_n - s_n)^{1/2- \epsilon}$ and $U_1 = x$. Interpolating between these points using straight lines gives a continuous monotonic increasing curve. Clearly for $t \in (0,1)$, \[ \limsup_{s \uparrow t } \frac{|U_t - U_s|}{|t-s|^{\alpha}} < \infty.\]
At $t =1$, for $s \in [s_n, s_{n+1}],$ 
\[ U_1 - U_s \leq U_1 - U_{s_n} = x - x_n \leq (1-s_n)^{\alpha} = \frac{(1- s_{n+1})^{\alpha}}{c^\alpha} \leq \frac{(1- s)^{\alpha}}{c^\alpha} \]
which implies \[ \limsup_{s \uparrow 1 } \frac{|U_1 - U_s|}{|1-s|^{\alpha}} < \infty.\]
This also clearly implies conditions (C1) and $ U \in BV_{LR}$. Finally note that 
\[ \frac{|U_{t_n} - U_{s_n}|}{\sqrt{t_n -s_n}} = \frac{1}{(t_n - s_n)^{\epsilon}}\]
and $\|U\|_{1/2} = + \infty$.
\end{itemize}

At last, we mention the following side remark which was the initial motivation to carry out this work. 
In the random setting, Rohde and Schramm \cite{BasicSLE} showed that if $U_t = \sqrt{\kappa}B_t$ where $\kappa >0, \kappa \neq 8$ and $B$ is standard Brownian motion, then almost surely the Loewner chain driven by $U$ admits a trace $\gamma$ (referred as SLE$_\k$). 
Further, $\gamma$ is a simple curve when $\k \leq 4$. In an attempt to understand the sample path properties of $B$ which implies the existence of a simple trace for SLE$_{\kappa}$, $\kappa \leq 4$, a condition like (C1) seems natural since they do not require uniformity with respect to $t$ as explained above.
Even though the Brownian drivers are far from being treated by methods of the present article, Brownian sample paths do satisfy a local regularity condition similar to (C1) at its slow points.
Recall that $t > 0$ is called a $\alpha$-slow point from left for Brownian motion $B$ if 
\[ \limsup_{s \to 0 + } |B_t - B_{t-s}| /\sqrt{s} \leq \alpha. \]
It is well known that such times $t > 0$ exist if $\alpha >1$ and form a dense subset; see e.g. \cite{bmbook}.  Thus, if $\kappa<4$, $\alpha\in (1, 2/\sqrt{\kappa})$ and $t$ is a $\alpha$-slow point of $B$, then
\begin{equation} \label{U-slow}
\limsup_{s \to 0 + } |U_t - U_{t-s}| /\sqrt{s} \leq \sqrt{\kappa}\alpha < 2,
\end{equation}
which is similar to the condition (C1) presented above. 
This is coherent with the fact that SLE$_{\kappa}$ is a simple curve only for $\kappa \leq 4$, suggesting that the constant $2$ appearing in condition (C1) is optimal. 

Another fact is that the set of slow points is preserved under shifts in the Wiener space by Cameron-Martin $\mathcal{W}^{1,2}$ functions. The class of functions $BV_{LR}$ is also stable under such shifts. We believe that slow points play a crucial (but not complete) role in the existence of trace and it is interesting to look for more deeper properties of Brownian sample paths required to understand the existence of trace for SLE$_{\kappa}$.


The organization of the paper is as follows. In section \ref{Complex-Bessel-Equations} we give the proof of Theorem \ref{bvresult}.  The existence of limit $\gamma_t$ is established in section \ref{Complex-Bessel-Equations-Part-I} and the continuity of $\gamma_t$ is proved in section \ref{Complex-Bessel-Equations-Part-II}. Section \ref{proof of continuity} and section \ref{(C1)} contain the proofs of Theorem \ref{continuity} and Theorem \ref{smoothness} respectively.

{\bf Acknowledgements:} We would like to thank Steffen Rohde and Peter Friz for various discussions and Wendelin Werner for his  valuable comments on an earlier draft of this article. A part of the research was carried out when the first author was a PhD student at Technical University of Berlin and subsequently a visiting fellow at Indian Statistical Institute, Bangalore.  The second author acknowledges the financial support from the European Research Council (ERC) through a Consolidator Grant \#683164.  The third author is supported by SNF grant \#155922. We sincerely thank anonymous reviewers for their helpful comments.

\section{Proof of Theorem \ref{bvresult}}\label{Complex-Bessel-Equations}

 In this section we consider a $U\in BV_{LR}[0,T]$ and employ elementary tools of analysis and measure theory to verify the condition \eqref{cond_RS} which implies existence of the trace. To this end, we subdivide the proof into two parts as follows. The subsection \ref{Complex-Bessel-Equations-Part-I} will be aimed at establishing the existence of the limit 
\begin{equation} \label{RS1} \gamma_t  := \lim\limits_{y \to 0+ } f_t(iy + U_t)
\end{equation} 
 and the subsection \ref{Complex-Bessel-Equations-Part-II} will be aimed at establishing the continuity of the curve $ t \mapsto \gamma_t$.

\subsection{Reverse time Loewner differential equation}  \label{Complex-Bessel-Equations-Part-I}

The basic idea in this section is to utilize reverse time LDE in order to prove the existence of the limit \eqref{RS1}. More precisely, reverse LDE characterizes the dynamics of $f_t(z) $ for $ z \in \mathbb{H}$ as follows. Define $\beta_s^t = U_t - U_{t-s}$ for $ s \in [0,t]$. We fix $t \in (0,T]$ for the rest of this section and with a slight abuse of notation, write $\beta_s$ to mean $\beta_s^t$.

\begin{lemma}\label{inverseflow}
For each fixed $ t \in (0,T]$ and $z \in \mathbb{H}$, 
\[ f_t(z + U_t) = h_t(z)\]where $ h_s(z)_{s \in [0,t]}$ is given by the solution of the reverse time LDE 
\begin{equation}\label{crucialode}
dh_s(z) = d\beta_s + \frac{-2}{h_s(z)}ds, \hspace{2mm} h_0(z) = z.
\end{equation}
\end{lemma}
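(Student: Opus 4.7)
The plan is to obtain $h_s$ as the time-reversed flow of the forward Loewner equation evaluated along the fixed point $f_t(z+U_t)$, and then to absorb the drift translation so as to match the prescribed form of the reverse equation.

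First I fix $t \in (0,T]$ and $z \in \mathbb{H}$, and set $w := f_t(z+U_t)$, which lies in $\mathbb{H}\setminus K_t$. For $s\in[0,t]$ the point $w$ also lies in $\mathbb{H}\setminus K_{t-s}$, so I can define the auxiliary function $\phi_s := g_{t-s}(w)$. By the forward LDE \eqref{LDE} applied at $w$ together with the chain rule in the parameter $s$,
\begin{equation*}
\phi_s = g_t(w) + \int_0^s \partial_u g_{t-u}(w)\, du = (z+U_t) - \int_0^s \frac{2}{g_{t-u}(w) - U_{t-u}}\, du = (z+U_t) - \int_0^s \frac{2}{\phi_u - U_{t-u}}\, du,
\end{equation*}
so $\phi_s$ is absolutely continuous in $s$ and satisfies $\dot\phi_s = -2/(\phi_s - U_{t-s})$. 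Note $\phi_0 = z+U_t$ and $\phi_t = w = f_t(z+U_t)$.

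Next I define $h_s(z) := \phi_s - U_{t-s}$. The boundary values are immediate: $h_0(z) = (z+U_t) - U_t = z$ and $h_t(z) = w - U_0 = f_t(z+U_t)$, which is the identity we want. It remains to check that $h_s$ satisfies \eqref{crucialode}. Since $U$ is of finite variation on $[0,T]$, so is $s \mapsto U_{t-s}$ on $[0,t]$, and as signed Stieltjes measures $dU_{t-s} = -d\beta_s$ because $\beta_s = U_t - U_{t-s}$. Adding the absolutely continuous increment of $\phi_s$ to the finite variation increment of $-U_{t-s}$ in the Riemann--Stieltjes sense gives
\begin{equation*}
h_s(z) = z + \beta_s - \int_0^s \frac{2}{\phi_u - U_{t-u}}\, du = z + \beta_s - \int_0^s \frac{2}{h_u(z)}\, du,
\end{equation*}
which is the integrated form of \eqref{crucialode}.

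Uniqueness of the solution of \eqref{crucialode} is needed only if one wants to speak of \emph{the} solution $h_s$; it follows from a standard Gronwall argument once one observes that $\Im h_s$ is nondecreasing (because $\Im(-2/h_s) > 0$ whenever $h_s \in \mathbb{H}$), so the vector field $h\mapsto -2/h$ stays Lipschitz along the trajectory. The one substantive point to watch is the legitimacy of splitting the Stieltjes increment of $h_s$ into its absolutely continuous and finite variation parts; this is the step where the assumption $\TV{U}_T < \infty$ is used, and it is clean precisely because one half of the driving term contributes a bounded-variation measure and the other half a Lebesgue-absolutely continuous one, so no stochastic or rough-path interpretation is required.
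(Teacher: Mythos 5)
Your proof is correct and is essentially the same as the paper's: the paper also defines $h_s(z) = g_{t-s}(f_t(z+U_t)) - U_{t-s}$, checks the boundary values $h_0(z)=z$, $h_t(z)=f_t(z+U_t)$, and applies the forward LDE to verify \eqref{crucialode}. You have merely spelled out the chain-rule computation and the Stieltjes bookkeeping $dU_{t-s} = -d\beta_s$ that the paper leaves implicit, plus a short uniqueness remark that the paper omits.
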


\begin{proof}Note that $h_s(z) = g_{t-s}(f_t(z+ U_t)) - U_{t-s}$ for $s \in [0,t]$ is a flow from $z $ to $ f_t(z + U_t)$ and using LDE \eqref{LDE}, $h_{s}(z)$ satisfies equation \eqref{crucialode}.
\end{proof} 

Since $z \in \mathbb{H}$, the solution $h_s(z)$ of equation \eqref{crucialode} stays in $\mathbb{H}$. For analysing the behaviour of $h_s(z)$ as $z \to 0$, it becomes beneficial to look at the curves defined by $\phi_s(w) := h_s(\sqrt{w})^2$ for $ w\in \mathbb{C}\setminus [0, \infty)$. Recall the map $z \mapsto z^2$ is a conformal isomorphism $\mathbb{H}\to \mathbb{C}\setminus [0, \infty) $ with the inverse map $ \mathbb{C}\setminus [0, \infty) \to \mathbb{H}$ given by $w \mapsto \sqrt{w}$, where $\sqrt{w}$ is taken to be the square root of $w$ with positive imaginary part. Since $\beta$ is of finite total variation, it easily follows that $\phi_s(w)$ satisfies 
\begin{equation}\label{eqn-of-phi}
 d\phi_s(w) = 2 \sqrt{\phi_s(w)} d\beta_s - 4ds, \hspace{2mm} \phi_0(w) = w
\end{equation}
for each $w \in \mathbb{C}\setminus [0,\infty)$. The key idea here is to give meaning to the curve $\phi_s(0)$ as a solution of the equation \eqref{eqn-of-phi} with starting point $w = 0$. We first need the following definition.

\begin{definition} For a  curve $ X: [0,T] \rightarrow \mathbb{C}$, a branch square root of $X$ is a measurable function $A:[0,T] \rightarrow \overline{\mathbb{H}} $ such that for all $t$, $ A_t ^2 = X_t $.  

\end{definition}

It is easy to check that for any curve $X$, a branch square root exists. Whenever the curve $X$ hits the positive real axis, a branch square root makes a choice of positive or negative square root in a measurable way. Clearly $X$ can have more than one branch square roots in general. With an abuse of notation, we will denote all branch square roots (or a particular one) by symbol $ A_t = \sqrt{X_t}^b$. Note that for any such branch square root, one has $|\sqrt{X_t}^b| = \sqrt{|X_t|}$, and thus $|\sqrt{X_t}^b|$ is continuous. The following lemma will be useful to choose branch square roots which are continuous. First we recall without proof the following basic result which we will frequently use.

\begin{lemma} 
\label{analysis}Let $x_n$ be a sequence in a metric space $M$ and $x\in M$ an element such that for any subsequence $x_{n_k}$, there is a further  subsequence 
which converges to $x$. Then the sequence $x_n$ converges to $x$.

\end{lemma}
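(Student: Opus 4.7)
The plan is to argue by contraposition. Suppose the sequence $x_n$ does \emph{not} converge to $x$ in $M$. Unpacking the negation of convergence in a metric space, there must exist some $\varepsilon_0 > 0$ and an increasing sequence of indices $n_1 < n_2 < \cdots$ such that
\[
d(x_{n_k}, x) \geq \varepsilon_0 \quad \text{for all } k \geq 1.
\]
In other words, from the failure of convergence we extract a subsequence $(x_{n_k})_k$ which stays uniformly bounded away from $x$.

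Next I would apply the hypothesis to this very subsequence: there is, by assumption, a further subsequence $(x_{n_{k_j}})_j$ with $x_{n_{k_j}} \to x$ as $j \to \infty$. In particular, for $j$ large enough one has $d(x_{n_{k_j}}, x) < \varepsilon_0$. This directly contradicts the lower bound $d(x_{n_{k_j}}, x) \geq \varepsilon_0$ inherited from the parent subsequence. The contradiction forces $x_n \to x$.

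The argument is really just a two-line contrapositive, and I do not expect any genuine obstacle: the only thing to be a little careful about is the elementary step of writing out the negation of ``$x_n \to x$'' correctly in a metric space, i.e.\ producing a single $\varepsilon_0$ that works for infinitely many indices, which is immediate by extracting one index for each value of $1/m$ failing and passing to a subsequence if needed. No completeness, compactness, or separability assumption on $M$ is required; the statement holds in any metric space (indeed, any first-countable topological space, with the same proof).
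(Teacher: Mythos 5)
Your argument is correct. The paper does not actually supply a proof of this lemma—it is stated with the explicit note that it is recalled ``without proof''—so there is nothing in the paper to compare against; your two-line contrapositive (extract a subsequence uniformly bounded away from $x$, then note it can have no further subsequence converging to $x$) is exactly the standard argument one would expect, and your remark that it works in any first-countable space is also accurate.
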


\begin{lemma} \label{limitbranch} Let $X^n , X : [0,T] \rightarrow \mathbb{C}$ be  curves with $X_0 = 0 $, $ X_0^n \in \mathbb{C}\setminus(0, \infty)$ and $ X_t^n \in \mathbb{C}\setminus [0, \infty)$ for all $n$ and $t > 0$. If $X^n$ converges uniformly to $X$, then there exist a branch square root $\sqrt{X}^b$ of $X$ and a subsequence $X^{n_k}$ such that $\sqrt{X^{n_k}}$ converges uniformly to $\sqrt{X}^b$. In particular, $\sqrt{X}^b$ is continuous. Further, if $X_t \in \mathbb{C}\setminus [0, \infty)$ for all $t > 0$, then $\sqrt{X^n}$ converges uniformly to $\sqrt{X}$.
\end{lemma}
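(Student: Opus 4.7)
The plan is to define $Y^n_t := \sqrt{X^n_t}$ using the continuous branch of the square root on $\mathbb{C}\setminus(0,\infty)$ taking values in $\overline{\mathbb{H}}$; the hypotheses $X^n_0 \in \mathbb{C}\setminus(0,\infty)$ and $X^n_t \in \mathbb{C}\setminus[0,\infty)$ for $t>0$ ensure each $Y^n$ is continuous from $[0,T]$ to $\overline{\mathbb{H}}$ and uniformly bounded by $\sqrt{\sup_{n,t}|X^n_t|}$. I plan to apply Arzel\`a--Ascoli to $\{Y^n\}$: any uniformly convergent subsequential limit $A$ is continuous, and passing to the limit in $(Y^{n_k})^2 = X^{n_k}$ yields $A^2 = X$, so $A$ is automatically a continuous branch square root. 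The ``Further'' statement is the easy case: when $X_t \in \mathbb{C}\setminus[0,\infty)$ for $t>0$, for each $\epsilon>0$ the compact image $\{X_t : t \in [\epsilon,T]\}$ lies in the open set $\mathbb{C}\setminus[0,\infty)$, on a compact neighborhood of which $\sqrt{\cdot}$ is uniformly continuous; combined with $X^n \to X$ uniformly this gives $\sqrt{X^n} \to \sqrt{X}$ uniformly on $[\epsilon,T]$, while on $[0,\epsilon]$ both $|\sqrt{X_t}| = \sqrt{|X_t|}$ and $|\sqrt{X^n_t}| = \sqrt{|X^n_t|}$ are uniformly small by continuity of $X$ at $0$.

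For the general case, the task is to establish equicontinuity of $\{Y^n\}_{n \geq N}$, after which Arzel\`a--Ascoli extracts the desired subsequence. Let $\omega$ be a common modulus of continuity for $\{X^n\}_{n \geq N}$ inherited from the uniformly continuous limit $X$. From $(Y^n_t - Y^n_s)(Y^n_t + Y^n_s) = X^n_t - X^n_s$, if $|Y^n_t + Y^n_s| \geq \sqrt{\omega(|t-s|)}$ then dividing gives $|Y^n_t - Y^n_s| \leq \sqrt{\omega(|t-s|)}$. The delicate case is $|Y^n_t + Y^n_s| < \sqrt{\omega(|t-s|)}$: since $\mathrm{Im}(Y^n_s) + \mathrm{Im}(Y^n_t) = \mathrm{Im}(Y^n_s + Y^n_t) \in [0, \sqrt{\omega(|t-s|)})$, both imaginary parts are at most $\sqrt{\omega(|t-s|)}$; if moreover $|Y^n_s| > 2\sqrt{\omega(|t-s|)}$, then $(\mathrm{Re}(Y^n_s))^2 > 3\omega(|t-s|)$ and $|\mathrm{Re}(Y^n_s) + \mathrm{Re}(Y^n_t)| < \sqrt{\omega(|t-s|)}$, so $\mathrm{Re}(Y^n_t)$ has large opposite sign. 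Continuity of $Y^n$ in $\overline{\mathbb{H}}$ then forces its trajectory to cross the imaginary axis at some $u_* \in (s,t)$, where $Y^n_{u_*} = i\eta$ and $X^n_{u_*} = -\eta^2 \leq 0$. But then $\mathrm{Re}(X^n_s) = (\mathrm{Re}(Y^n_s))^2 - (\mathrm{Im}(Y^n_s))^2 > 2\omega(|t-s|)$ gives $|X^n_s - X^n_{u_*}| \geq \mathrm{Re}(X^n_s) - \mathrm{Re}(X^n_{u_*}) > 2\omega(|t-s|)$, contradicting $|X^n_s - X^n_{u_*}| \leq \omega(|t-s|)$. Hence $|Y^n_s|, |Y^n_t| \leq 2\sqrt{\omega(|t-s|)}$ and $|Y^n_t - Y^n_s| \leq 4\sqrt{\omega(|t-s|)}$.

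Equicontinuity and uniform boundedness then give, via Arzel\`a--Ascoli, a uniformly convergent subsequence $Y^{n_k} \to A$ in $C([0,T],\overline{\mathbb{H}})$, and $A$ is the desired continuous branch square root as explained. The hardest step will be the contradiction argument above, in particular the intermediate-value reasoning showing that a continuous curve in $\overline{\mathbb{H}}$ from a point of large positive real part to a point of large negative real part must cross the imaginary axis; the boundary case $s = 0$ where $Y^n_0$ is purely imaginary (or zero) is easier and is handled directly, either by the first branch of the dichotomy (when $\mathrm{Im}(Y^n_0) > 0$ gives a lower bound on $|Y^n_0 + Y^n_t|$) or trivially when $Y^n_0 = 0$.
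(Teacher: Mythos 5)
Your proposal is correct, and it takes the same overall route as the paper: establish uniform boundedness and equicontinuity of $\{\sqrt{X^n}\}$, invoke Arzel\`a--Ascoli to extract a uniformly convergent subsequence $\sqrt{X^{n_k}} \to A$, and observe that passing to the limit in $(\sqrt{X^{n_k}})^2 = X^{n_k}$ gives $A^2 = X$, so $A$ is automatically a continuous branch square root. The only part that differs is how the equicontinuity is argued. The paper splits on whether $|X^n_s|$ or $|X^n_t|$ is small (below $\epsilon^2$) and, in the remaining case, lower-bounds $\lvert\sqrt{X^n_s}+\sqrt{X^n_t}\rvert$ directly by a further case analysis on $\Re X^n_s$, $\Re X^n_t$, invoking the fact that $X^n$ avoids $[0,\infty)$ via a brief parenthetical remark. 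You instead split on whether $\lvert\sqrt{X^n_s}+\sqrt{X^n_t}\rvert$ itself is small, and in that case derive a contradiction by an explicit intermediate-value argument: large $|\sqrt{X^n_s}|$ together with small sum forces opposite signs of $\Re\sqrt{X^n_s}$ and $\Re\sqrt{X^n_t}$, hence a crossing time $u_*$ with $X^n_{u_*}\in(-\infty,0]$, which is incompatible with the modulus of continuity of $X^n$ \emph{along the whole interval} $[s,t]$. This is precisely the mechanism the paper's parenthetical ``(since the curve $X^n$ does not intersect the positive real axis)'' is gesturing at, but you make essential use of the constraint $|X^n_{u_*}-X^n_s|\le\omega(|t-s|)$ for the intermediate time $u_*$, which the paper leaves implicit, and your thresholds ($|Y^n_s|>2\sqrt{\omega}$, hence $\Re X^n_s>2\omega$) are calibrated so that the contradiction is clean. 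In short: same strategy, a more carefully spelled-out crossing argument in the equicontinuity step, and a slightly more hands-on (but equally valid) treatment of the ``further'' statement via a compactness-plus-small-tail split rather than the paper's subsequence principle (Lemma \ref{analysis}). No gaps.
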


\begin{proof} Note that family of curves $\{\sqrt{X^n}\}$ is uniformly bounded. We will prove that this family is equicontinuous. Then the Arzela-Ascoli's theorem implies that there exists a subsequence $\sqrt{X^{n_k}}$ converging uniformly to a  continuous function $A$ which is a branch square root of $X$. 

For proving the equicontinuity of the family $\{\sqrt{X^n}\}$, let $\epsilon > 0$. We need to exhibit a $\delta$ such that if $|t-s| \leq \delta$, then $|\sqrt{X_t^n} - \sqrt{X_s^n}| = O( \epsilon )$  for all $n$.
Since the family $\{X^n\}$ is equicontinuous, first choose $\delta>0$ such that for all $n$, 
$$|X_t^n-X_s^n|\leq \epsilon^2 \mbox{ whenever } |t-s|\leq \delta.$$
Fix $t$ and $s$ such that $|t-s|\leq \delta$. If $|X^n_t|\leq \epsilon^2$ or $|X^n_s| \leq \epsilon^2$, then both $|X^n_t|\leq 2\epsilon^2$ and $|X^n_s|\leq 2\epsilon^2$, which implies $|\sqrt{X_t^n} - \sqrt{X_s^n}| \leq 2\sqrt{2}\epsilon $.

Otherwise, that is if $|X^n_t|\geq \epsilon^2$ and $|X^n_s|\geq \epsilon^2$, then we claim that $|\sqrt{X^n_s} + \sqrt{X^n_t}|\geq \epsilon/2$, which by the identity 
$$|\sqrt{X_t^n} - \sqrt{X_s^n}| = \frac{|X_t^n - X_s^n|}{|\sqrt{X_t^n} +  \sqrt{X_s^n}|}$$
 implies that $|\sqrt{X_t^n} - \sqrt{X_s^n}| =O(\epsilon)$.

Indeed, since 
$$\epsilon^2\leq |X^n_s| = \Re X^n_s + 2(\Im \sqrt{X^n_s})^2, $$
if $\Re X^n_s\leq \epsilon^2/2$, then 
$$ |\sqrt{X^n_s} + \sqrt{X^n_t}| \geq \Im \sqrt{X^n_s} \geq \epsilon/2. $$
It is similar if $\Re X^n_t\leq \epsilon^2/2$. Now, if both $\Re X^n_t> \epsilon^2/2$ and $\Re X^n_s> \epsilon^2/2$, then the signs of $\Im(X_s^n)$ and $\Im(X_t^n)$ are the same (since the curve $X^n$ does not intersect the positive real axis). That implies $\Re\sqrt{X^n_t}$ and $\Re\sqrt{X^n_s}$ have the same signs. Hence,
$$ |\sqrt{X^n_s} + \sqrt{X^n_t}| \geq \Re \sqrt{X^n_s} \geq \epsilon/\sqrt{2}. $$
The above mentioned claim is proved.



Finally, if $X_t \in \mathbb{C}\setminus [0, \infty)$ for $t>0$, then there is only one branch square root given uniquely by $\sqrt{X}$. The uniform convergence of the whole sequence is a consequence of Lemma \ref{analysis}.
\end{proof}

We are now ready to give a sense to equation \eqref{eqn-of-phi} with  $w =0$. A curve $\phi_s = \phi_s(0)$ is called a solution to \eqref{eqn-of-phi} with $w=0 $ if 
\begin{equation}\label{w=0} \phi_s = 2 \int_0^s \sqrt{\phi_r}^bd\beta_r - 4s 
\end{equation}
for some continuous branch square root $\sqrt{\phi}^b$, where the integral is interpreted as an Riemann-Stieltjes integral.

\begin{remark} We have crucially utilized the assumption that $\beta$ is of finite total variation while giving meaning to the equation \eqref{w=0} because $\sqrt{\phi}^b$ is assumed to be continuous only and integral in \eqref{w=0} is understood as a Riemann-Stieltjes/Lebesgue integral. We believe that the assumption of finite total variation and properties of Riemann-Stieltjes/Lebesgue integral are crucial for the proofs in this paper.
In Lemma \ref{analysis-II} in the next section \ref{Complex-Bessel-Equations-Part-II}, we will see another important feature of measure theory which is Portamanteau Theorem or weak compactness of totally bounded sets to be crucially important. In particular, we found it non-trivial to avoid the condition of finite total variation and perhaps use other calculus methods e.g. Young's calculus in order to consider drivers of finite $p$-variation for $p>1$. We plan to study such drivers in our future projects.

\end{remark}

Our next goal is to establish existence and uniqueness of solution to equation \eqref{w=0}. To this end we first prove the following lemma.

\begin{lemma} \label{uniquebranch}
Let a  curve $  v : [0,t] \rightarrow \mathbb{C}$ with a continuous branch square root $ \sqrt{v}^b$ satisfy $|\Re(\sqrt{v_s}^b)| \leq \TV{\beta}_s$ and 
\begin{equation}\label{eqn-of-v} v_s = 2\int_0^s \sqrt{v_r}^b d\beta_r - 4s 
\end{equation}
for all $ s \in [0,t]$. If for some $\delta < 2 $ and $s_0 \in (0, t]$ depending on $t$, $\TV{\b}_s \leq  \d \sqrt{s}$ for $s\in [0,s_0]$, then 
\begin{enumerate}
\item For all $s\in (0,t]$,  $v_s \in \mathbb{C}\setminus [0, \infty)$ and $ \sqrt{v_s}^b = \sqrt{v_s}$. 

\item Moreover, for $c_\d = \sqrt{4-\d^2}> 0$ and $ s \in (0,s_0]$, 
\begin{equation}
     \label{lower bound} c_\d \leq \Im(\sqrt{v_s})/\sqrt{s} \leq 2. 
\end{equation}
In particular, (a) and (b) hold if the condition (C1) is satisfied. 

\item There exists a constant $C$ depending only on $\beta$ such that $\|v\|_{\infty}\leq C$ and $|v_r - v_s| \leq C(\TV{\beta}_{[r,s]} + s-r)$ for all $r\leq s$.
\end{enumerate}

\end{lemma}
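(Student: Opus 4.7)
My plan is to bootstrap from the integral equation \eqref{eqn-of-v} in three stages: (c) first, by an $L^\infty$ estimate; then the lower bound in (b) (which automatically gives (a) on $(0,s_0]$), by separating \eqref{eqn-of-v} into real and imaginary parts and exploiting the hypothesis $|\Re \sqrt{v_r}^b| \leq \TV{\beta}_r$; and finally, once $w := \sqrt{v}^b$ is known to be bounded below, deriving a reverse Loewner ODE for $w$ that yields the upper bound in (b) and extends (a) to all of $(0, t]$.

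For (c), the obvious estimate $|w_r| \leq \sqrt{\|v\|_\infty}$ plugged back into \eqref{eqn-of-v} gives the quadratic inequality $\|v\|_\infty \leq 2\sqrt{\|v\|_\infty}\,\TV{\beta}_t + 4t$, which bounds $\|v\|_\infty$ in terms of $\beta$ alone, and the stated increment bound then follows by applying \eqref{eqn-of-v} on $[r,s]$. For the lower bound in (b), I write $w_s = a_s + ib_s$ with $b_s \geq 0$ and separate \eqref{eqn-of-v} into real and imaginary parts to obtain
\[
 b_s^2 = a_s^2 + 4s - 2\int_0^s a_r\, d\beta_r.
\]
The assumption $|a_r|\leq \TV{\beta}_r$ together with the Stieltjes identity $\int_0^s \TV{\beta}_r\, d\TV{\beta}_r = \tfrac{1}{2}\TV{\beta}_s^2$ controls the last term by $\TV{\beta}_s^2 \leq \delta^2 s$, giving $b_s^2 \geq (4-\delta^2)s$. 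In particular $b_s>0$ on $(0,s_0]$, so $v_s \notin [0,\infty)$ and $\sqrt{v_s}^b = \sqrt{v_s}$ there.

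For the upper bound and the extension, I will show that on any sub-interval of $(0,s_0]$ where $|w|$ is bounded below, $w$ inherits BV regularity from $v$ (via local Lipschitzness of the principal square root on a compact subset of $\mathbb{H}$ bounded away from $\mathbb{R}$). The Stieltjes chain rule $dv = 2w\, dw$ combined with \eqref{eqn-of-v} and division by $w$ then produces the reverse Loewner ODE
\[
 dw = d\beta - \frac{2}{w}\, ds,
\]
whose imaginary part reads $db_s = (2 b_s/|w_s|^2)\, ds \geq 0$. Integrating this from $\eta$ to $s \leq s_0$ and letting $\eta \to 0$ (so $w_\eta \to w_0 = 0$) gives $b_s^2 = 4\int_0^s b_r^2/|w_r|^2\, dr \leq 4s$, the claimed upper bound. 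Monotonicity of $b$ moreover extends (a) past $s_0$: letting $\tau := \inf\{s \in (s_0,t] : b_s = 0\}$, the ODE applies on each $[s_0, \tau-\epsilon]$ and forces $b_s \geq b_{s_0} > 0$ there, so by continuity $b_\tau \geq b_{s_0} > 0$, contradicting $b_\tau = 0$ and showing that $\tau$ is not attained in $[s_0,t]$.

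The main technical obstacle is the rigorous passage from the integral equation for $v$ to the ODE for $w$. One is only given that $w$ is continuous, and the argument has to upgrade this to BV on intervals where $|w|$ is bounded below before the Stieltjes chain rule and the subsequent division by $w$ can be applied. This is precisely where the finite-variation hypothesis on $\beta$ plays its essential role, consistent with the remark preceding the lemma that it does not obviously relax to finite $p$-variation for $p>1$.
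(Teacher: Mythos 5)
Your proof is correct and follows essentially the same route as the paper: the same Stieltjes estimate $2\int_0^s |\Re(\sqrt{v_r}^b)|\,d\TV{\beta}_r \le \TV{\beta}_s^2 \le \delta^2 s$ yields the lower bound, and the reverse Loewner ODE for $w=\sqrt{v}$ --- which the paper encodes as $d(X^2-Y^2)=2X\,d\beta-4\,ds$ and $d(2XY)=2Y\,d\beta$, then ``step-by-step'' deduces $d(Y^2)=4Y^2/(X^2+Y^2)\,ds$ --- gives the upper bound $Y_s^2\le 4s$ together with the monotonicity of $Y$ used to keep $v$ in $\mathbb{C}\setminus[0,\infty)$ past $s_0$. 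You are, if anything, slightly more explicit than the paper at the step of upgrading $w$ to bounded variation on compact sub-intervals where $\Im w$ is bounded away from $0$ before invoking the Stieltjes chain rule and dividing by $w$.
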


\begin{proof} The condition $\TV{\beta}_s \leq \d \sqrt{s}$ for every $s\in [0,s_0]$ implies  that for $s \in (0,s_0]$,
\[\frac{2}{s} \int_0^s | \Re(\sqrt{v_r}^b)|  d \TV{\beta}_r \leq \frac{1}{s}\TV{\beta}_s^2 \leq \d^2 <4.  \]
Thus,
\[ \Re(v_s) \leq  (\d^2 -4) s, \]
which implies  for $  s \in (0, s_0]$ that $v_s \in \mathbb{C}\setminus [0, \infty)$ and that
\[\Im (\sqrt{v_s}) \geq \sqrt{4-\d^2 }\sqrt{ s}. \]
 Since the solution of equation \eqref{eqn-of-phi} remains in $\mathbb{C}\setminus [0, \infty)$ once the starting point $ w \in \mathbb{C}\setminus [0, \infty)$, we conclude that $ v_s \in \mathbb{C}\setminus [0, \infty)$ for all time $ s \in (0,t]$.
 
  Write $X_s + iY_s = \sqrt{v_s}$. By comparing the real and imaginary parts on both sides of  equation (\ref{eqn-of-v}), we derive differential formulae for $X^2_s - Y^2_s$ and $2X_sY_s$
  \begin{eqnarray*}
  d(X^2_s - Y^2_s) & = & 2X_s d\beta - 4ds\\
  d(2X_s Y_s) & = & 2Y_s d\beta.
  \end{eqnarray*}
  
  Then we can step-by-step deduce differential formulae for $(X^2_s+Y^2_s)^2$, $X^2_s + Y^2_s$, $X^2_s$, and $Y_s$. In particular, 
   $$d(Y^2_s) = \frac{4Y_s^2}{X^2_s + Y^2_s} ds$$
   which means 
    \[Y_s^2 = \int_0^s \frac{4 Y_r^2 }{ X_r^2+ Y_r^2} d r \leq 4s.\]
  This implies the other inequality in (\ref{lower bound}) and the boundedness of $v$.
  
 %


Finally, a bound on the modulus of continuity of $v$ follows by applying triangle inequality to the equation \eqref{eqn-of-v}.
\end{proof}

 The above lemma tells us in particular, under the condition (C1), solutions to the equation \eqref{w=0} leave $[0,\infty)$ immediately and hence the equation \eqref{w=0} can be equivalently written with the usual complex square root as 
\begin{equation}\label{w=0-new}
\phi_s = 2 \int_0^s \sqrt{\phi_r}d\beta_r - 4s. 
\end{equation} 

We now prove the following result on the existence and the  uniqueness of solution to \eqref{w=0-new}. 

\begin{proposition}\label{existenceofphi} 
Let $U\in BV_{LR}$. Then there exists a unique continuous function $\phi_s = \phi_s(0)$ with $\phi_s \in \mathbb{C}\setminus [0, \infty)$ for $s > 0$, $ |\Re(\sqrt{\phi_s}) | \leq \TV{\beta}_s $, and satisfying \eqref{w=0-new}.

\end{proposition}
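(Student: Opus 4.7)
My plan is to prove existence by approximating $\phi_s(0)$ from within the upper half-plane and uniqueness via a Gronwall-type estimate that exploits the lower bound on $\Im\sqrt{\phi}$ provided by Lemma~\ref{uniquebranch}.

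For existence, consider the family of approximating solutions $\phi^\epsilon_s := h_s(i\epsilon)^2$ starting from $w_\epsilon = -\epsilon^2 \in \mathbb{C}\setminus[0,\infty)$. Each $\phi^\epsilon$ is a classical solution of \eqref{eqn-of-phi}, so the arguments used in the proof of Lemma~\ref{uniquebranch} (which depend only on the integral equation and the size of $\beta$) adapt to give bounds on $\phi^\epsilon$ that are uniform in $\epsilon$. In particular, the family $\{\phi^\epsilon\}$ is uniformly bounded, equicontinuous on $[0,t]$, and satisfies $\phi^\epsilon_s \in \mathbb{C}\setminus[0,\infty)$ with $\Im\sqrt{\phi^\epsilon_s} \geq c_\delta\sqrt{s}$ on a small interval $(0, s_0]$ (the perturbation from $-\epsilon^2$ only shifts the estimates by $O(\epsilon^2)$). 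Arzel\`a--Ascoli produces a subsequential uniform limit $\phi$ and Lemma~\ref{limitbranch} then yields uniform convergence $\sqrt{\phi^{\epsilon_n}} \to \sqrt{\phi}$ (in fact the principal branch, since $\phi$ also avoids $[0,\infty)$ on $(0,t]$, arguing first on $[0,s_0]$ and then continuing via the ODE with nondegenerate initial value $\phi_{s_0}$). Passing to the limit in the Riemann--Stieltjes integral equation is legitimate because the integrand $\sqrt{\phi^{\epsilon_n}_r}$ converges uniformly and $\beta$ has finite total variation; this yields \eqref{w=0-new}. Finally, the bound $|\Re\sqrt{\phi^{\epsilon_n}_s}| \leq \epsilon_n + \TV{\beta}_s$, which follows from the integral equation and $|\Re\sqrt{w_{\epsilon_n}}|=0$, transfers the constraint $|\Re\sqrt{\phi_s}|\leq\TV{\beta}_s$ to the limit.

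For uniqueness, suppose $\phi^1, \phi^2$ both satisfy the stated properties. Lemma~\ref{uniquebranch} gives $\Im\sqrt{\phi^i_s}\geq c_\delta\sqrt{s}$ on a common interval $[0,s_0]$, so that
\[
\phi^1_s - \phi^2_s = 2 \int_0^s \bigl(\sqrt{\phi^1_r}-\sqrt{\phi^2_r}\bigr) d\beta_r = \int_0^s \frac{2(\phi^1_r-\phi^2_r)}{\sqrt{\phi^1_r}+\sqrt{\phi^2_r}} d\beta_r,
\]
with $|\sqrt{\phi^1_r}+\sqrt{\phi^2_r}| \geq 2c_\delta\sqrt{r}$. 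This sets up a Gronwall-type inequality for the quantity $M(s) := \sup_{r\leq s} |\phi^1_r - \phi^2_r|/r$ that forces $M\equiv 0$ on a sufficiently small subinterval of $[0,s_0]$. Once $\phi^1=\phi^2$ on $[0,s_0]$, uniqueness propagates to $[0,t]$ by standard Riemann--Stieltjes ODE uniqueness, since $w \mapsto 2\sqrt{w}$ is Lipschitz on any compact subset of $\mathbb{C}\setminus[0,\infty)$ where both $\phi^i$ remain.

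I expect the main obstacle to be the uniqueness step near $s=0$: the direct Gronwall estimate, after bounding $\int_0^s \sqrt{r}\, d\TV{\beta}_r\leq \delta s$, closes only when $\delta/c_\delta<1$ (i.e.\ $\delta<\sqrt{2}$), while (C1) guarantees only $\delta<2$. To bridge this gap I would sharpen the estimate via integration by parts, exploiting monotonicity of $\TV{\beta}$ and the fact that on arbitrarily small intervals $\TV{\beta}_s/\sqrt{s}$ approaches the true $\limsup$ (which can be sub-optimally chosen); alternatively, uniqueness can be deduced directly from the existence scheme by showing that any solution with the stated properties is a cluster point of $\phi^\epsilon$ as $\epsilon\to 0$, so compactness plus the matching argument forces agreement with the full limit.
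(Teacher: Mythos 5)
Your existence outline is essentially the paper's: approximate by $\phi^\epsilon_s := h_s(i\epsilon)^2$, obtain uniform bounds and equicontinuity from the integral equation, extract a subsequential uniform limit via Arzel\`a--Ascoli and Lemma~\ref{limitbranch}, and pass to the limit in the Riemann--Stieltjes integral. One small sharpening: the bound on the real part has no $+\epsilon$ error. Writing $\sqrt{\phi^\epsilon_s}=X_s+iY_s$, the reverse flow gives $d(X_sY_s)=Y_s\,d\beta_s$ with $X_0=0$, so $X_s=Y_s^{-1}\int_0^s Y_r\,d\beta_r$, and monotonicity of $Y$ gives $|X_s|\le\TV{\beta}_s$ exactly. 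This is the hypothesis of Lemma~\ref{uniquebranch} needed for the limit, so getting it without the $\epsilon$ slack matters.

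The uniqueness step is where the genuine gap lies, and you diagnose it correctly: the Gronwall estimate on $M(s)=\sup_{r\le s}|\phi^1_r-\phi^2_r|/r$, after $\int_0^s\sqrt r\,d\TV{\beta}_r\le\delta s$, closes only when $\delta/c_\delta<1$, i.e.\ $\delta<\sqrt2$, while (C1) allows $\delta$ up to $2$. Neither of your proposed repairs bridges this. Choosing $\delta$ closer to the true left-$\limsup$ and integrating by parts cannot improve the contraction constant for drivers whose $\limsup$ lies in $[\sqrt2,2)$: such drivers are admitted by (C1), and for them $\delta/c_\delta\ge1$ no matter how you optimize. The alternative of arguing that any solution is a cluster point of $\{\phi^\epsilon\}$ is circular: to identify a given solution $\phi$ as the limit of $\phi^\epsilon$ you must compare the two near $s=0$, which is exactly the stability estimate that failed; compactness alone cannot single out the limit without a separate uniqueness argument.

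The paper's uniqueness proof works with $\sqrt{\phi^i}$ rather than $\phi^i$ and replaces Gronwall by an exact representation. From $\sqrt{\phi^i_s}=\beta_s-\int_{0+}^s \frac{2}{\sqrt{\phi^i_r}}\,dr$ one gets
\[
\sqrt{\phi^1_s}-\sqrt{\phi^2_s}=\bigl(\sqrt{\phi^1_u}-\sqrt{\phi^2_u}\bigr)\exp\left[\int_u^s\frac{2}{\sqrt{\phi^1_r}\sqrt{\phi^2_r}}\,dr\right],
\]
and the real part of the integrand is bounded via $\frac{X_r^2}{(X_r^2+Y_r^2)^2}\le\frac{\delta^2}{8}\cdot\frac{d\log Y_r}{dr}$ (and the analogue in $\tilde X,\tilde Y$), using the slope bound from Lemma~\ref{uniquebranch}. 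The identity $d\log Y_r/dr=2/(X_r^2+Y_r^2)$ converts the a priori divergent integral into $\frac{\delta^2}{8}\log\bigl(Y_s\tilde Y_s/(Y_u\tilde Y_u)\bigr)$, which yields $|\sqrt{\phi^1_s}-\sqrt{\phi^2_s}|\le C\,(Y_s\tilde Y_s)^{\delta^2/8}\,u^{1/2-\delta^2/8}$, tending to $0$ as $u\to0+$ for \emph{every} $\delta<2$. The passage from $\phi$ to $\sqrt\phi$ and the logarithmic-derivative identity are the essential ingredients your outline is missing.
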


\begin{proof}We first address the uniqueness of solution. Let $\phi^1$ and $\phi^2$ be two solutions satisfying the conditions above.  From Lemma \ref{uniquebranch}, for $i=1,2$ and $s \leq s_0$,
\begin{equation}\label{slope-bound}
 \sqrt{4-\delta^2} \leq \frac{\Im(\sqrt{\phi_s^i})}{\sqrt{s}} \leq 2 \hspace{4mm} \mbox{and} \hspace{4mm} \frac{|\Re(\sqrt{\phi_s^i})|}{\Im(\sqrt{\phi_s^i})} \leq \frac{\delta}{\sqrt{4-\delta^2}}.
\end{equation}


In particular,
\[ \int_{0+}^s \frac{1}{|\sqrt{\phi_r^i|}}dr < \infty\]and 
\[ \sqrt{\phi_s^i} = \beta_s  + \int_{0+}^{s}\frac{-2}{\sqrt{\phi_r^i}}dr.\]

Write $\sqrt{\phi_s^1} = X_s + i Y_s, \sqrt{\phi_s^2} = \tilde{X}_s + i \tilde{Y}_s$. Then for any $0< u <s \leq s_0 $,
\[\sqrt{\phi^1_s} - \sqrt{\phi^2_s} = \sqrt{\phi^1_u} - \sqrt{\phi^2_u} + \int_u^s \frac{2(\sqrt{\phi^1_r} - \sqrt{\phi^2_r})}{\sqrt{\phi^1_r}\sqrt{\phi^2_r}}dr, \]
which implies 
\begin{equation}\label{derivative}
\sqrt{\phi^1_s} - \sqrt{\phi^2_s} = (\sqrt{\phi^1_u} - \sqrt{\phi^2_u}) \exp\biggl[ \int_u^s \frac{2}{\sqrt{\phi^1_r}\sqrt{\phi^2_r}}dr\biggr].
\end{equation}
Note that for $s> 0$, we have $Y_s, \tilde{Y}_s >0$ and 
$$\frac{d\log(Y_s)}{ds} = \frac{2}{X_s^2 + Y_s^2} \,\text{ and }\, 
\frac{d\log(\tilde{Y}_s)}{ds} = \frac{2}{\tilde{X}_s^2 + \tilde{Y}_s^2}.$$
Use the estimate  \eqref{slope-bound},
\begin{align*}
\Re\biggl[ \int_u^s \frac{2}{\sqrt{\phi^1_r}\sqrt{\phi^2_r}}dr\biggr] &= \int_u^s \frac{2(X_r\tilde{X}_r - Y_r\tilde{Y}_r)}{(X_r^2 + Y_r^2)(\tilde{X}_r^2 + \tilde{Y}_r^2)}dr  \\
&\leq \int_u^s \frac{2X_r\tilde{X}_r}{(X_r^2 + Y_r^2)(\tilde{X}_r^2 + \tilde{Y}_r^2)}dr \\
& \leq \int_u^s  \frac{X_r^2}{(X_r^2 + Y_r^2)^2}dr + \int_u^s  \frac{\tilde{X}_r^2}{(\tilde{X}_r^2 + \tilde{Y}_r^2)^2}dr \\
&\leq \frac{\delta^2}{8}\biggl(\int_u^s \frac{2}{(X_r^2 + Y_r^2)}dr + \int_u^s \frac{2}{(\tilde{X}_r^2 + \tilde{Y}_r^2)}dr\biggr) \\
& = \frac{\delta^2}{8}\log\biggl\{\frac{Y_s\tilde{Y}_s}{Y_u\tilde{Y}_u}\biggr\}.
\end{align*}

Thus, it follows from \eqref{derivative}

\begin{equation}\label{final-bound}
|\sqrt{\phi^1_s} - \sqrt{\phi^2_s}| \leq ({Y_s\tilde{Y}_s})^{\delta^2/8}|\sqrt{\phi^1_u} - \sqrt{\phi^2_u}|({Y_u\tilde{Y}_u})^{-\delta^2/8}.
\end{equation}
Since $\Re(\sqrt{\phi_u^i}) \leq \TV{\beta}_u \leq \delta \sqrt{u}$ and $\Im(\sqrt{\phi^i_u}) \leq 2\sqrt{u}$, $|\sqrt{\phi^1_u} - \sqrt{\phi^2_u}| \leq C\sqrt{u}$ for some constant $C$. Also, again from \eqref{slope-bound}, $Y_u\tilde{Y}_u \geq (4-\delta^2)u$. Thus, \eqref{final-bound} gives 
\[|\sqrt{\phi^1_s} - \sqrt{\phi^2_s}| \leq C ({Y_s\tilde{Y}_s})^{\delta^2/8} \sqrt{u}^{1 - \delta^2/4}\]
for some constant $C$. Since $\delta < 2$, by letting $u \to 0+$, the right hand side of the previous inequality tends to $0$ which implies $\phi_s^1= \phi_s^2$ for $s \leq s_0$. Finally, the fact $\phi_{s_0}^1 = \phi_{s_0}^2 \in \mathbb{C}\setminus [0, \infty)$ and the uniqueness of solution to equation \eqref{eqn-of-phi} for starting point $ w \in\mathbb{C}\setminus [0, \infty) $ imply $\phi_s^1 = \phi_s^2$ for all $ s \in [0,t]$. 

For the existence of a solution, by applying triangle inequality to \eqref{eqn-of-phi}, one can see that the functions $\{\phi_\cdot(-y^2), y \in (0,1]\}$ form a uniformly bounded equicontinuous family. Thus, by Arzela-Ascoli Theorem and Lemma \ref{limitbranch}, there is a subsequence $\phi(-y_n^2)$ converging uniformly to a  continuous function $\phi$ and $ \sqrt{\phi(-y_n^2)}$ converging uniformly to some continuous branch square root $\sqrt{\phi}^b$ as $ y_n \rightarrow 0 + $. Then it follows from a convergence theorem of Riemann-Stieltjes integrals that \[ \phi_s = 2 \int_0 ^s \sqrt{\phi_r}^bd\beta_r - 4s.\]
Also, it follows from equation \eqref{crucialode} that if $ X_s + i Y_s = h_s(iy) = \sqrt{\phi_s(-y^2)}$, then 
\begin{eqnarray*}
d(X_s-\beta_s) &=& \frac{-2X_s}{X_s^2+Y_s^2}ds \\
\mbox{ and }\quad dY_s  &=& \frac{2Y_s}{X_s^2+Y_s^2}ds.
\end{eqnarray*}
It implies that 
$$d (X_s Y_s) = X_s dY_s + Y_s dX_s = Y_s d\beta_s.$$
Therefore
$$X_s=\frac{1}{Y_s} \int^s_0 Y_r d\beta_r. $$
Use the Riemann-Stieltjes inequality and the monotonicity of $Y_s$, 
$$|X_s|\leq \frac{1}{Y_s} \left(\sup_{r\in [0,s]} Y_r\right) \,\TV{\beta}_s = \TV{\beta}_s.$$

In particular, $ |\Re(\sqrt{\phi_s(-y^2)})| \leq \TV{\beta}_s$ for all $y>0$. Thus, $ |\Re( \sqrt{\phi_s}^b)| \leq \TV{\beta}_s$. Finally, use Lemma \ref{uniquebranch} to obtain $\phi_s \in \mathbb{C}\setminus [0, \infty)$ for all $ s > 0$ and $\sqrt{\phi}^b = \sqrt{\phi} $, which concludes the proof.
\end{proof}

As an immediate corollary, we obtain the existence of the limit \eqref{RS1}.

\begin{corollary}\label{limitexist}
The solution $\phi(-y^2)$ of equation \eqref{eqn-of-phi} with $ y>0$ converges uniformly to the solution $ \phi(0)$ as $ y \rightarrow 0+$. In particular, $f_t(iy + U_t) = h_t(iy ) = \sqrt{\phi_t(-y^2)}$ converges to $\sqrt{\phi_t(0)}$ as $ y \rightarrow 0+$.
\end{corollary}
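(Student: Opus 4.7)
The plan is to upgrade the subsequential convergence that was already extracted inside the proof of Proposition \ref{existenceofphi} into convergence of the whole family $\{\phi(-y^2)\}_{y>0}$, by combining Arzela--Ascoli with the uniqueness of Proposition \ref{existenceofphi} and the subsequence principle of Lemma \ref{analysis}. Concretely, I fix an arbitrary sequence $y_n \to 0+$ and aim to show $\phi(-y_n^2) \to \phi(0)$ uniformly on $[0,t]$. It then suffices, by Lemma \ref{analysis}, to check that every subsequence admits a further subsequence converging uniformly to $\phi(0)$.

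Given a subsequence $y_{n_k}$, the family $\{\phi(-y_{n_k}^2)\}$ is uniformly bounded and equicontinuous (apply the triangle inequality to \eqref{eqn-of-phi}, exactly as noted in the existence argument of Proposition \ref{existenceofphi}). Arzela--Ascoli produces a further subsequence along which $\phi(-y_{n_{k_j}}^2)$ converges uniformly to some continuous $\psi$. Lemma \ref{limitbranch}, applied along one more subsubsequence if necessary, provides a continuous branch square root $\sqrt{\psi}^b$ such that $\sqrt{\phi(-y_{n_{k_j}}^2)} \to \sqrt{\psi}^b$ uniformly. Passing to the limit in the Riemann--Stieltjes integral equation satisfied by $\phi(-y_{n_{k_j}}^2)$ (which is legitimate because $\beta$ has finite total variation and the integrands converge uniformly) yields
\[
\psi_s = 2 \int_0^s \sqrt{\psi_r}^b \, d\beta_r - 4s.
\]
The uniform bound $|\Re(\sqrt{\phi_s(-y^2)})| \leq \TV{\beta}_s$ established in the proof of Proposition \ref{existenceofphi} passes to the limit, so $|\Re(\sqrt{\psi_s}^b)| \leq \TV{\beta}_s$, and then Lemma \ref{uniquebranch} guarantees $\psi_s \in \mathbb{C} \setminus [0, \infty)$ for all $s > 0$. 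Thus $\psi$ satisfies every hypothesis of Proposition \ref{existenceofphi}, so by uniqueness $\psi = \phi(0)$. Lemma \ref{analysis} now delivers the uniform convergence of the whole family.

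For the ``in particular'' statement, since $\phi_t(0) \in \mathbb{C} \setminus [0, \infty)$ and the convergence $\phi(-y^2) \to \phi(0)$ is uniform, the second part of Lemma \ref{limitbranch} gives $\sqrt{\phi_t(-y^2)} \to \sqrt{\phi_t(0)}$ as $y \to 0+$. By Lemma \ref{inverseflow}, the left-hand side equals $f_t(iy + U_t)$, so $\lim_{y \to 0+} f_t(iy + U_t) = \sqrt{\phi_t(0)}$, which is exactly the candidate trace point. The only non-bookkeeping step is verifying that the subsequential limit $\psi$ genuinely lies in the uniqueness class of Proposition \ref{existenceofphi}; the possible difficulty there is making sure the pointwise estimate on $|\Re(\sqrt{\phi(-y^2)})|$ and the integral equation both transfer cleanly to the limit, but uniform convergence of both $\phi(-y_{n_{k_j}}^2)$ and its chosen square root handles each of these directly.
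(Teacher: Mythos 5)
Your proof is correct and follows essentially the same route as the paper's: extract subsequential limits via Arzel\`a--Ascoli, use Lemma \ref{limitbranch} and passage to the limit in the Riemann--Stieltjes equation to identify any such limit with the unique solution $\phi(0)$ from Proposition \ref{existenceofphi}, then invoke Lemma \ref{analysis} to upgrade to convergence of the whole family. The paper compresses this by simply pointing back to the existence argument in Proposition \ref{existenceofphi}, whereas you spell out the verification that the limit lies in the uniqueness class; the content is the same.
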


\begin{proof}
As in the proof of Proposition \ref{existenceofphi}, for any sequence $\phi(-y_n^2)$ with $y_n \to 0+$, there is a subsequence $\phi(-y_{n_k}^2)$ converging uniformly to a solution of equation \eqref{w=0-new}. Since $\phi(0)$ is the unique solution of equation \eqref{w=0-new}, using Lemma \ref{analysis}, we conclude that $\phi(-y^2)$ converges uniformly to $\phi(0)$ as $ y \rightarrow 0+$. Finally, since $ \phi_t(0) \in \mathbb{C}\setminus [0, \infty)$, we arrive at $ \sqrt{\phi_t(-y^2)} \rightarrow  \sqrt{\phi_t(0)}$ as $ y \rightarrow 0+$.
\end{proof}

\subsection{Continuity of the map $t\mapsto\gamma_t$} \label{Complex-Bessel-Equations-Part-II}

In this section, we prove the continuity of $\gamma$ defined by equation \eqref{RS1}. At this point we denote the solution constructed in Proposition \ref{existenceofphi} as $ \phi_s^t = \phi_s^t(0)$ for $ s \in [0,t]$. As seen in Corollary \ref{limitexist}, \[ \gamma_t = \sqrt{ \phi_t^t}\]

The following lemma will be the key for establishing the continuity of $\gamma$. 
\begin{lemma}\label{analysis-II}
Let $X^n$ be a sequence of continuous functions on $[0,T]$ converging uniformly to $X$. Suppose $\sup_n \TV{X^n}+ \TV{X} < \infty$, then for any continuous function $Z$,  
\[ \int_0^TZ_rdX_r^n \rightarrow \int_0^T Z_rdX_r \hspace{2mm} \mbox{as} \hspace{2mm} n \to \infty. \]
\end{lemma}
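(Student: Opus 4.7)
My plan is to reduce the statement to the trivial case of step functions by exploiting uniform continuity of $Z$, with the bounded total variation hypothesis controlling the step function approximation error.

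First, since $Z$ is continuous on the compact interval $[0,T]$, it is uniformly continuous. Given $\varepsilon > 0$, I would choose a partition $0 = t_0 < t_1 < \dots < t_k = T$ fine enough that the oscillation of $Z$ on each subinterval $[t_{i-1}, t_i]$ is less than $\varepsilon$, and define the left-endpoint step function $Z^\varepsilon$ by $Z^\varepsilon_r = Z_{t_{i-1}}$ for $r \in [t_{i-1}, t_i)$ (and $Z^\varepsilon_T = Z_{t_{k-1}}$). By construction $\|Z - Z^\varepsilon\|_\infty \leq \varepsilon$.

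The next step is the key observation that for any function $Y$ of finite total variation, the Riemann-Stieltjes integral against a step function collapses to a finite telescoping sum:
\[
\int_0^T Z^\varepsilon_r\, dY_r = \sum_{i=1}^k Z_{t_{i-1}} \bigl(Y_{t_i} - Y_{t_{i-1}}\bigr).
\]
Since $X^n \to X$ uniformly (in particular, pointwise at each $t_i$), it follows immediately that
\[
\int_0^T Z^\varepsilon_r\, dX^n_r \;\longrightarrow\; \int_0^T Z^\varepsilon_r\, dX_r \qquad \text{as } n \to \infty.
\]

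To control the error from the step approximation, I would invoke the standard Riemann-Stieltjes bound $\bigl|\int_0^T f\, dY\bigr| \leq \|f\|_\infty \TV{Y}_T$ applied to $f = Z - Z^\varepsilon$. Setting $M := \sup_n \TV{X^n}_T + \TV{X}_T < \infty$, the triangle inequality yields
\[
\left| \int_0^T Z_r\, dX^n_r - \int_0^T Z_r\, dX_r \right| \leq 2\varepsilon M + \left| \int_0^T Z^\varepsilon_r\, dX^n_r - \int_0^T Z^\varepsilon_r\, dX_r \right|.
\]
Letting $n \to \infty$ and then $\varepsilon \to 0$ completes the proof.

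There is no real obstacle here; the argument is entirely routine once one sees that uniform convergence of $X^n$ allows trivial passage to the limit for step-function integrands, and that uniform boundedness of $\TV{X^n}_T$ is exactly what is needed to pay for the $\|Z - Z^\varepsilon\|_\infty$ error. This is essentially the Helly-type convergence theorem for Riemann-Stieltjes integrals.
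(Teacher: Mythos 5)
Your proof is correct and complete. The paper itself gives no argument: it simply states that the result ``follows easily as an application of Portmanteau Theorem and is left to the reader to verify,'' appealing to weak convergence of the signed Stieltjes measures $dX^n_r$ (uniform boundedness of total variation gives relative compactness, and uniform convergence of $X^n$ identifies the limit). What you have written is essentially a self-contained, elementary proof of that same Helly--Bray-type fact: you approximate $Z$ by a step function $Z^\varepsilon$ so that the integral collapses to a finite telescoping sum, pass to the limit termwise using pointwise convergence of $X^n$, and pay for the approximation error with the bound $\left|\int_0^T f\,dY\right|\le \|f\|_\infty \TV{Y}_T$ and the hypothesis $\sup_n \TV{X^n}_T < \infty$. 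One minor point worth stating explicitly: the telescoping identity $\int_0^T Z^\varepsilon_r\,dY_r = \sum_i Z_{t_{i-1}}(Y_{t_i}-Y_{t_{i-1}})$ is unproblematic here precisely because $Y = X^n$ or $X$ is continuous, so the step function $Z^\varepsilon$ and $Y$ share no common discontinuity and the Riemann--Stieltjes integral exists. Your route avoids invoking weak compactness of measures and is arguably cleaner for the reader; the paper's route is shorter to state but offloads the real work to a cited theorem.
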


\begin{proof}
The proof follows easily as an application of Portmanteau Theorem and is left to the reader to verify.  
\end{proof}

\begin{proposition}\label{continuity in t} The map $ t \mapsto \phi_t^t $ is continuous. In particular, $\gamma$ is a  curve.
 
\end{proposition}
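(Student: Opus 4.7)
Fix $t \in (0,T]$ and a sequence $t_n \to t$; the goal is to show $\phi^{t_n}_{t_n} \to \phi^t_t$. Extending each $\phi^{t_n}$ and $\beta^{t_n}$ past $t_n$ to all of $[0,T]$ by holding them constant, and extending $\phi^t$ past $t$ by the value $\phi^t_t$, I will establish the stronger statement that $\phi^{t_n} \to \phi^t$ uniformly on $[0,T]$. The desired endpoint convergence then follows from the triangle inequality $|\phi^{t_n}_{t_n} - \phi^t_t| \leq \|\phi^{t_n} - \phi^t\|_\infty + |\phi^t_{t_n} - \phi^t_t|$ together with the continuity of $s \mapsto \phi^t_s$ already furnished by Proposition \ref{existenceofphi}.

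To apply Arzela--Ascoli to $\{\phi^{t_n}\}$ on $[0,T]$, I combine the uniform sup-norm bound $\|\phi^{t_n}\|_\infty \leq (2\sqrt{T} + \TV{U}_T)^2$ and the modulus-of-continuity estimate $|\phi^{t_n}_s - \phi^{t_n}_r| \leq C(\TV{\beta^{t_n}}_{[r,s]} + (s-r))$ provided by Lemma \ref{uniquebranch} (with $C$ depending only on $\TV{U}_T$ and $T$) with equicontinuity of $\{\beta^{t_n}\}$. Since $U$ is continuous and of bounded variation on $[0,T]$, the variation function $a \mapsto \TV{U}_{[0,a]}$ is continuous on the compact interval $[0,T]$ and hence uniformly continuous; in view of the identity $\TV{\beta^{t_n}}_{[r,s]} = \TV{U}_{[t_n - s,\, t_n - r]}$, this is exactly the uniform equicontinuity required. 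After extracting a uniformly convergent subsequence $\phi^{t_{n_k}} \to \psi$, Lemma \ref{limitbranch} furnishes a further subsequence along which $\sqrt{\phi^{t_{n_k}}} \to \sqrt{\psi}^b$ uniformly, for some continuous branch square root.

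To identify $\psi$ with $\phi^t$ on $[0,t]$, I pass to the limit in the integral equation via the splitting
\[
\int_0^s \sqrt{\phi^{t_{n_k}}_r}\,d\beta^{t_{n_k}}_r = \int_0^s \sqrt{\psi_r}^b\,d\beta^{t_{n_k}}_r + \int_0^s \bigl(\sqrt{\phi^{t_{n_k}}_r} - \sqrt{\psi_r}^b\bigr)\,d\beta^{t_{n_k}}_r;
\]
the first piece converges via Lemma \ref{analysis-II} (using $\beta^{t_n} \to \beta^t$ uniformly by continuity of $U$, with total variations uniformly bounded by $\TV{U}_T$), while the second is controlled by $\|\sqrt{\phi^{t_{n_k}}} - \sqrt{\psi}^b\|_\infty \TV{U}_T \to 0$. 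Passing the real-part bound $|\Re(\sqrt{\phi^{t_{n_k}}_s})| \leq \TV{\beta^{t_{n_k}}}_s$ to the limit yields $|\Re(\sqrt{\psi_s}^b)| \leq \TV{\beta^t}_s$, and (C1) for $U$ at $t$ together with Lemma \ref{uniquebranch} then forces $\psi_s \in \mathbb{C} \setminus [0,\infty)$ for $s > 0$. The uniqueness part of Proposition \ref{existenceofphi} therefore identifies $\psi|_{[0,t]} = \phi^t$, and Lemma \ref{analysis} upgrades the subsequential convergence to full sequential convergence.

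The main obstacle is arranging all a priori estimates uniformly in $n$; the crucial observation is that (C1) is invoked only at the single point $t$, in order to apply uniqueness to the limit $\psi$. No delicate uniform control of the (C1) constants $\delta$ and $s_0$ along the sequence $t_n$ is needed, because the Arzela--Ascoli step relies only on the continuity of $a \mapsto \TV{U}_{[0,a]}$, which is a consequence of continuity of $U$.
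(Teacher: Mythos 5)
Your argument is essentially the same as the paper's: uniform boundedness and equicontinuity from Lemma \ref{uniquebranch}, Arzel\`a--Ascoli plus Lemma \ref{limitbranch} to extract a convergent subsequence of the curves and of their square roots, Lemma \ref{analysis-II} to pass to the limit in the Riemann--Stieltjes integral, the uniqueness statement of Proposition \ref{existenceofphi} (correctly invoking (C1) only at the limit time $t$) to identify the limit, and Lemma \ref{analysis} to upgrade subsequential to full convergence. The only implementation difference is the device used to put the curves $\phi^{t_n}$ (defined on $[0,t_n]$) on a common interval: you extend them and the drivers $\beta^{t_n}$ by constants to $[0,T]$, whereas the paper rescales time, setting $\alpha^t_s := \phi^t_{st/t_0}$ on the fixed interval $[0,t_0]$ for $t$ near $t_0$. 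Both work; your version makes the equicontinuity via $\TV{\beta^{t_n}}_{[r,s]} = \TV{U}_{[t_n-s,\,t_n-r]}$ especially transparent, but it requires an extra (routine) verification you do not state, namely that the subsequential limit $\psi$ agrees with the constant extension of $\phi^t$ on $(t,T]$. This does hold: for any fixed $s>t$, eventually $t_{n_k}<s$ so $\phi^{t_{n_k}}_s=\phi^{t_{n_k}}_{t_{n_k}}$, hence $\psi$ is constant on $(t,T]$ and by continuity equals $\psi_t=\phi^t_t$ there; but you should say so, since otherwise ``$\psi=\phi^t$'' on the whole of $[0,T]$ is not what your uniqueness step delivers.

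One small gap: you fix $t\in(0,T]$ and never treat $t=0$, which is part of the statement — continuity of $\gamma$ at $0$ with $\gamma_0=0$ requires $\phi^t_t\to 0$ as $t\to 0+$. The paper handles this separately and directly from the bound $|\phi^t_t|\le C\bigl(\TV{\beta^t}_t + t\bigr)$ of Lemma \ref{uniquebranch}(c); the very same a priori estimate you already use for equicontinuity gives it in one line, so you should add it.
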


\begin{proof}
Note that for $s \in [0,t]$, \[ \phi_s^t = 2\int_0 ^s \sqrt{\phi_r^t}d\beta_r^t - 4s  \] 
From Lemma \ref{uniquebranch}, the curves $\phi^t$ are uniformly bounded in $t$ and \[ |\phi_t^t |\leq C(\TV{\beta^t}_{t}+ t) \]
for some constant $C$, implying continuity at $ t = 0$ \[ \lim\limits_{t \rightarrow 0+ }\phi_t^t = 0 .\]
For continuity on $ (0, T]$, fix a time $ t_0 > 0$. Then for $ t \in ( t_0 /2 , 2t_0) $, define $\alpha_s^t = \phi_{ s t/t_0}^t$ for $ s \in [0, t_0]$. Note that  $| \Re(\sqrt{\alpha_s^t})| \leq \TV{\beta^t}_{ st/t_0} $ and \[ \alpha_s^t = 2 \int_0^s \sqrt{\alpha_r^t} d \beta_{ r t/ t_0}^t - 4 s t/t_0.\]
Lemma \ref{uniquebranch} implies that the family of curves $\{\alpha^t\}$ is uniformly bounded and equicontinuous. Again, by Arzela-Ascoli's theorem and Lemma \ref{limitbranch}, along some subsequence $t_n \rightarrow t_0 $, $ \alpha^{t_n} $ converges uniformly to some continuous function $\tilde{\phi}$ and $ \sqrt{\alpha^{t_n}}$ converges uniformly to some branch square root $ \sqrt{\tilde{\phi}}^b$ with $ |\Re(\sqrt{\tilde{\phi}_s}^b)| \leq \TV{\beta^{t_0}}_s$ on $[0,t_0]$. As an application of Lemma \ref{analysis-II} and together with the Riemann-Stieltjes' inequality, we see that \[ \tilde{\phi}_s = 2 \int_0 ^s \sqrt{\tilde{\phi}_r}^bd \beta_r ^{t_0} - 4 s. \] 
 Using Lemma \ref{uniquebranch} and Proposition \ref{existenceofphi}, we conclude that $ \tilde{\phi}_s = \phi_s^{t_0 }$. Finally, Lemma \ref{analysis} implies that $\alpha^{t}$ converges uniformly to $ \phi^{t_0}$ as $  t \rightarrow t_0$. In particular, $ \phi_t^t = \alpha_{t_0}^t \rightarrow \phi_{t_0}^{t_0}$. Note that $ \phi_t^t \in \mathbb{C}\setminus [0, \infty) $ for all $ t > 0 $. Thus, $\gamma_t = \sqrt{\phi_t^t}$ is also a curve.   
\end{proof}

\begin{proof}[Proof of Theorem \ref{bvresult}]
The existence of the trace $\gamma$ follows from Corollary \ref{limitexist} and Proposition \ref{continuity in t}. Clearly, $\gamma_t \in \mathbb{H}$ for all $t>0$ from the construction above of $\gamma$. For the simpleness of $\gamma$, suppose on the contrary $\gamma_s = \gamma_{s'} $ for $ s < s'$. Note that chain $\tilde{K}_t := g_s( K_{t + s}\setminus K_s) - U_s$ is driven by $ \tilde{U}_t  = U_{t+s }- U_s$. Clearly $ \tilde{U}\in BV_{LR}$ and by above argument $\tilde{K}$ is generated by a curve $\tilde{\gamma}$ with $\tilde{\gamma}_t \in \mathbb{H}$. But since $\gamma_s = \gamma_{s'} $, $\tilde{\gamma}_{s'- s} \in \mathbb{R}$ which is a contradiction. 

\end{proof}

\begin{remark}
We emphasize that in our approach it was very beneficial to consider the squared equation for $\phi_s^t(0) = h_s^t(0)^2$ starting from $0$ instead of considering the equation for $h_s^t(0)$ itself with the additional assumption that it takes values in upper half plane $\mathbb{H}$. Even though the solution $h_s^t(0)$ eventually takes value in $\mathbb{H}$, it was important in the  proof above to consider equation \eqref{w=0} which is unconditionally well defined by using the concept of branch square root. This is because $\mathbb{H},\mathbb{C}\setminus [0,\infty)$ are open sets and a sequence of functions taking values in here can escape the set in the limit. Thus, imposing the additional assumption that $h_s^t(0) \in \mathbb{H}$ is not stable when the parameter $t$ is varied. As evident in the proof above, we got around this issue while checking the continuity of $t\mapsto \gamma_t$  by considering \eqref{w=0} instead. 
\end{remark}

\section{Proof of Theorem \ref{continuity}}\label{proof of continuity}

In this section we will employ the approach developed in the previous section to obtain the continuity of the map $\Psi: BV_{LR}[0,T] \to C([0,T], \overline{\mathbb{H}})$ mapping $U \mapsto \gamma$. In fact we will prove a slightly stronger version of Theorem \ref{continuity} as follows. 

\begin{proposition} Let $U^n, U \in BV_{LR}[0,T]$ with $\|U^n - U\|_{\infty} \to 0 $ as $n \to \infty$. Further assume that $\sup_{n}\TV{U^n}_{T} < \infty$ and family of curves $s \mapsto \TV{U^n}_{s}$ is equicontinuous in $n$. If $\gamma^n$ and $\gamma$ are the trace of Loewner chain driven by $U^n$ and $U$ respectively, then 
\[ \|\gamma^n - \gamma\|_{\infty} \to 0 \hspace{2mm} \mbox{as} \hspace{2mm} n \to \infty.\] 

\end{proposition}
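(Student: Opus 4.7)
The plan is to mimic the trace-existence proof of Section~\ref{Complex-Bessel-Equations}, upgrading the one-parameter family of squared reverse flows $\phi^t$ to a two-parameter family $\phi^{n,t}$ and controlling the joint limit $(n,t) \to (\infty, t_0)$. Recall $\gamma^n_t = \sqrt{\phi^{n,t}_t(0)}$, where $\phi^{n,t}$ is the unique solution on $[0,t]$ of \eqref{w=0-new} with driver $\beta^{n,t}_s := U^n_t - U^n_{t-s}$, and similarly $\gamma_t = \sqrt{\phi^t_t(0)}$. By Lemma~\ref{analysis} it suffices to show that any subsequence of $(U^n)$ admits a further subsequence along which $\gamma^n \to \gamma$ uniformly on $[0,T]$. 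For $t$ near $0$, Lemma~\ref{uniquebranch}(c) yields $|\phi^{n,t}_t(0)| \leq C(\TV{\beta^{n,t}}_t + t) = C(\TV{U^n}_t + t)$, with $C$ chosen independent of $n$ because $\sup_n \TV{U^n}_T < \infty$; the equicontinuity of $s \mapsto \TV{U^n}_s$ at $0$ then makes this bound uniformly small on $[0,\eta]$ for $\eta$ small, and the same bound applied to $U$ handles $\gamma_t$. This controls the small-time part of $\|\gamma^n - \gamma\|_\infty$.

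For $t \in [\eta, T]$ I would fix $t_0 \in [\eta, T]$ and consider the rescaled family $\alpha^{n,t}_s := \phi^{n,t}_{st/t_0}$ on $s \in [0,t_0]$ for $t$ near $t_0$, which satisfies
$$
\alpha^{n,t}_s \;=\; 2\int_0^s \sqrt{\alpha^{n,t}_r}\; d\bigl(\beta^{n,t}_{r t/t_0}\bigr) \;-\; 4 s t/t_0.
$$
Lemma~\ref{uniquebranch}(c), combined with the uniform TV-bound and the equicontinuity of $s \mapsto \TV{U^n}_s$, makes $\{\alpha^{n,t}\}$ uniformly bounded and equicontinuous. Given any sequence $(n_k,t_k) \to (\infty,t_0)$, Arzelà--Ascoli together with Lemma~\ref{limitbranch} extracts a sub-subsequence along which $\alpha^{n_k,t_k}$ and $\sqrt{\alpha^{n_k,t_k}}$ converge uniformly to some continuous $\tilde\phi$ and a continuous branch square root $\sqrt{\tilde\phi}^{\,b}$ with $|\Re\sqrt{\tilde\phi_s}^{\,b}| \leq \TV{\beta^{t_0}}_s$. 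The rescaled drivers $s \mapsto \beta^{n_k,t_k}_{st_k/t_0} = U^{n_k}_{t_k} - U^{n_k}_{t_k - st_k/t_0}$ converge uniformly in $s$ to $\beta^{t_0}_s$ (since $\|U^{n_k} - U\|_\infty \to 0$, $t_k \to t_0$, and $U$ is uniformly continuous), while their total variations stay bounded by $\sup_n \TV{U^n}_T$, so Lemma~\ref{analysis-II} allows me to pass to the limit in the Riemann--Stieltjes integral and identify $\tilde\phi$ as a solution of \eqref{w=0} with driver $\beta^{t_0}$. Uniqueness in Proposition~\ref{existenceofphi} forces $\tilde\phi = \phi^{t_0}$, and a further application of Lemma~\ref{analysis} gives $\alpha^{n,t} \to \phi^{t_0}$ uniformly as $(n,t) \to (\infty,t_0)$; evaluating at $s = t_0$ and taking usual square roots (valid since $\phi^{t_0}_{t_0} \in \mathbb{C} \setminus [0,\infty)$) yields $\gamma^n_t \to \gamma_{t_0}$. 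A standard compactness-contradiction argument (if uniform convergence failed, choose $t_k \in [\eta, T]$ realizing the sup, extract $t_k \to t_0$ and use continuity of $\gamma$ from Proposition~\ref{continuity in t}) then promotes this joint convergence at each $t_0$ to uniform convergence on $[\eta, T]$.

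The main obstacle is executing the identification of $\tilde\phi$ simultaneously in the two parameters $n$ and $t$: the driver itself depends on $t$ through the time-reversal $s \mapsto U^n_t - U^n_{t-s}$, and the integrand $\sqrt{\cdot}^{\,b}$ is only continuous, not Lipschitz, in the flow. The uniform TV-bound plus the equicontinuity of $s \mapsto \TV{U^n}_s$ are precisely what allow Lemma~\ref{analysis-II} to be applied to the doubly-indexed Riemann--Stieltjes integrals and prevent variation from accumulating as $(n,t)$ varies. The branch-square-root ambiguity along the approximating sequence is a secondary nuisance; it is defused by Lemma~\ref{limitbranch} together with the fact that the limiting squared flow $\phi^{t_0}_s$ avoids $[0,\infty)$ for $s > 0$, so the usual square root applies at the end.
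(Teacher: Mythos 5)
Your argument is correct and follows essentially the same route as the paper's proof: rescaled two-parameter families $\alpha^{n,t}_s = \phi^{n,t}_{st/t_0}$, uniform bounds and equicontinuity from Lemma~\ref{uniquebranch} plus the TV hypotheses, Arzel\`a--Ascoli and Lemma~\ref{limitbranch} to extract subsequential limits, Lemma~\ref{analysis-II} to pass to the limit in the Riemann--Stieltjes integral, and the uniqueness in Proposition~\ref{existenceofphi} to identify the limit as $\phi^{t_0}$, giving joint convergence $\phi^{n,t}_t \to \phi^{t_0}_{t_0}$ as $(n,t)\to(\infty,t_0)$. The only cosmetic difference is the final upgrade to uniform convergence: the paper uses a finite subcover of $[0,T]$ (treating $t_0=0$ within the same framework via the small-time bound), while you split off $[0,\eta]$ and run a compactness-contradiction argument on $[\eta,T]$ using continuity of $\gamma$; both are standard and correct.
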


\begin{proof}
We will use the notations from Section \ref{Complex-Bessel-Equations}. Let $\phi_s^{n,t}, \phi_s^{t}$ are the solutions to equation \eqref{w=0-new} driven by $\beta^{n,t}, \beta^t$ respectively as produced in Proposition \ref{existenceofphi}. For each $t_0 > 0$ and $ t \in (\frac{t_0}{2} , 2t_0 )$, define $\alpha_s^{n,t} = \phi_{\frac{ts}{t_0}}^{n,t}$  for $s \in [0,t_0]$ and note that 
\[ \alpha_s^{n,t} = 2\int_0^s \sqrt{\alpha_r^{n,t}} d\beta_{r t/t_0}^{n,t} - 4 st/t_0\] 
Lemma \ref{uniquebranch} implies that $\alpha^{n,t}$ is uniformly bounded in $n,t$ and using equicontinuity of $s \mapsto \TV{U^n}_{s}$ in $n$, we see that the family of curves $\{\alpha^{n,t}\}$ is also equicontinuous. Using Arzela-Ascoli Theorem and Lemma \ref{limitbranch}, along some subsequence $(n_k,t_k) \to (\infty, t_0) $, $\alpha^{n_k, t_k}$ converges uniformly to some curve $\tilde{\phi}$ and $\sqrt{\alpha^{n_k, t_k}}$ converges to some branch square root $\sqrt{\tilde{\phi}}^b$ of $\tilde{\phi}$. Using Lemma \ref{analysis-II}, $\tilde{\phi}$ satisfies 
\[ \tilde{\phi}_s = 2 \int_0^s \sqrt{\tilde{\phi}_r}^bd\beta_r^{t_0} - 4s \]
and Lemma \ref{uniquebranch} and Proposition \ref{existenceofphi} implies $\tilde{\phi} = \phi^{t_0}$. A variant of Lemma \ref{analysis} for double indexed sequences implies that $\alpha^{n,t}$ converges uniformly to $\phi^{t_0}$ as $(n, t) \to (\infty, t_0)$. In particular, $ \phi_{t}^{n,t} = \alpha_{t_0}^{n,t} \to \phi_{t_0}^{t_0}$ for each $t_0 > 0$. 
Also for $t_0 = 0 $, since
\[ |\phi_t^{n,t}| \leq C(\TV{\beta^{n,t}}_{t}+ t),\] 
we have $\phi_{t}^{n,t} \to \phi_{0}^0 = 0$ as $(n,t) \to (\infty, 0)$. In other words, for each $\epsilon > 0$ and $t_0 \in [0,T]$, there exists natural number $N_{t_0, \epsilon}$ and open ball $B_{t_0, \epsilon}$ around $t_0$ such that for $n \geq N_{t_0, \epsilon}$ and $t \in B_{t_0, \epsilon}$, 
\[ |\phi_t^{n,t} - \phi_{t_0}^{t_0}| \leq \epsilon. \]

By possibly choosing a smaller radius for ball $B_{t_0, \epsilon}$, we see that 
\[ |\phi_t^{n,t} - \phi_t^t| \leq |\phi_t^{n,t} - \phi_{t_0}^{t_0}| + |\phi_t^{t} - \phi_{t_0}^{t_0}| \leq 2\epsilon.\] 
The collections of balls $\{B_{t_0, \epsilon}\}_{t_0 \in [0,T]}$ forms an open cover of compact set $[0,T]$. Hence, it has a finite subcover, say $\{B_{t_i, \epsilon}\}_{i = 1, .., m}$. Now, for $n \geq \max_{i = 1, .. , m} N_{t_i, \epsilon}$, 
\[\sup_{t\in [0,T]} |\phi_t^{n,t} - \phi_t^t| \leq 2\epsilon\]
implying the uniform convergence of $\phi^{n}$  to $\phi$. Finally, note that $\gamma_t^n = \sqrt{\phi_t^{n,t}}$, $\gamma_t = \sqrt{\phi_t^{t}}$ and application of Lemma \ref{limitbranch} concludes the proof.
\end{proof}

\section{Proof of Theorem \ref{smoothness}}\label{(C1)}

In this section, we provide a sufficient condition on $U \in BV_{LR}[0,T]$ to generate a $C^1$ trace. Along with the assumption 
$U \in BV_{LR}[0,T]$, we further assume 

\[  \mbox{For all $t >0$,} \hspace{2mm} \int_{0+}^t \frac{1}{\sqrt{r}}d\TV{\beta^t}_r < \infty \]
and that the same integral from $0$ to $\vare$ converges uniformly to $0$ in the following sense:

\[ \mbox{(C2):} \hspace{2mm}  \exists \text{ increasing function } \delta: (0,T] \mapsto \m{R}_+, \text{ s.t. }  \delta(\vare) \xrightarrow[]{\vare \to 0} 0  \]
\[\text{ and } \int_{0+}^{\vare \wedge t} \frac{1}{\sqrt{r}}d\TV{\beta^t}_r \leq \delta(\vare), \forall t \in (0,T].\]

By  probably restricting to a smaller interval $[0,T]$, without loss of generality, we can assume that $\sup_{\vare > 0 } \d(\vare) =: c  < 2$.

\begin{proposition}\label{thm_(C1)} Let $ U \in BV_{LR}[0,T]$.
Further, suppose the condition (C2) holds. Then, the curve $t \mapsto \phi_t^t(0)$ is continuously differentiable. In particular, the curve $t \mapsto \gamma_{t^2}$ is continuously differentiable.

\end{proposition}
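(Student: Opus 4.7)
The plan is to show that $t\mapsto \phi^t_t(0)=\gamma_t^2$ is continuously differentiable on $[0,T]$ with an explicit derivative of the form $-4D_t$, where $D$ is a continuous function with $D_0=1$. The assertion about $u\mapsto\gamma_{u^2}$ then follows by a chain-rule computation using the principal square root, together with the facts that $\phi^t_t(0)$ vanishes only at $t=0$ and lies in $\mathbb{C}\setminus[0,\infty)$ for $t>0$.

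To derive the derivative formula, fix $t>0$ and small $\varepsilon>0$ and use the \emph{concatenation identity}
\[ \phi^{t+\varepsilon}_{t+\varepsilon}(0) = \phi^t_t(w^2), \qquad w^2 := \phi^{t+\varepsilon}_\varepsilon(0), \]
which holds because on $[\varepsilon,t+\varepsilon]$ the increments of $\beta^{t+\varepsilon}$ coincide with those of $\beta^t$ on $[0,t]$, so by the uniqueness in Proposition \ref{existenceofphi} the $h^{t+\varepsilon}$-flow over $[\varepsilon,t+\varepsilon]$ equals the $h^t$-flow started from $w=h^{t+\varepsilon}_\varepsilon(0)$. Combining Lemma \ref{uniquebranch}'s bound $|\sqrt{\phi^{t+\varepsilon}_r(0)}|\le 2\sqrt{r}$ with the (C2)-consequence $\TV{\beta^{t+\varepsilon}}_\varepsilon\le \sqrt{\varepsilon}\,\delta(\varepsilon)$, the driver term in $\phi^{t+\varepsilon}_\varepsilon(0)=2\int_0^\varepsilon \sqrt{\phi^{t+\varepsilon}_r(0)}\, d\beta^{t+\varepsilon}_r-4\varepsilon$ is $O(\varepsilon\,\delta(\varepsilon))=o(\varepsilon)$, hence $w^2=-4\varepsilon+o(\varepsilon)$. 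Separately, differentiating the $\phi$-ODE in its initial condition yields the variational formula
\[ D_t := \partial_{w_0}\phi^t_t(w_0)\big|_{w_0=0} = \exp\!\Bigl(\int_0^t \frac{d\beta^t_r}{h^t_r(0)}\Bigr), \]
which is a bounded complex number because $|h^t_r(0)|\geq c_\delta\sqrt{r}$ (Lemma \ref{uniquebranch}) and (C2) gives $\int_0^t d\TV{\beta^t}_r/\sqrt{r}\le\delta(T)$. A Gr\"onwall-type estimate on the linear equation satisfied by $\phi^t_t(w_0)-\phi^t_t(0)$ produces the first-order Taylor expansion $\phi^t_t(w_0)-\phi^t_t(0)=w_0 D_t+O(|w_0|^2)$; plugging in $w_0=w^2$ gives
\[ \phi^{t+\varepsilon}_{t+\varepsilon}(0)-\phi^t_t(0)=-4\varepsilon D_t+o(\varepsilon), \]
and a symmetric argument with $t-\varepsilon\to t$ yields the matching left derivative, so $\frac{d}{dt}\phi^t_t(0)=-4 D_t$.

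The main obstacle is the continuity of $t\mapsto D_t$, since both the integrand $1/h^t_r(0)$ and the Stieltjes measure $d\beta^t_r$ depend on $t$. I would handle it by splitting $\int_0^t = \int_0^\eta+\int_\eta^t$ at a cutoff $\eta>0$: the head is uniformly dominated by $\delta(\eta)/c_\delta$ (independently of $t\geq\eta$), while on $[\eta,T]$ the flows $\{h^t_\cdot(0)\}$ are continuous in $t$ by Proposition \ref{continuity in t} and bounded below in modulus by $c_\delta\sqrt{\eta}$, so a Lemma \ref{analysis-II}-style argument (using the equicontinuity of $\TV{\beta^t}$ in $t$ coming from the uniform (C2)-bound) gives continuity of the main piece. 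Letting $\eta\to 0$ yields continuity of $D$ on $(0,T]$, and the same head-estimate shows $D_t\to 1$ as $t\to 0^+$, so $D$ extends continuously to $[0,T]$ with $D_0=1$.

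Integrating, $\phi^t_t(0)=-4\int_0^t D_s\, ds$ is $C^1$ on $[0,T]$. For $u>0$, $\gamma_{u^2}=\sqrt{\phi^{u^2}_{u^2}(0)}$ (principal branch), and the chain rule gives
\[ \frac{d\gamma_{u^2}}{du}=-\frac{4u\,D_{u^2}}{\gamma_{u^2}}. \]
Since $\phi^{u^2}_{u^2}(0)/u^2\to -4D_0=-4$ as $u\to 0^+$, one has $\gamma_{u^2}/u\to 2i$, so the right-hand side extends continuously to $u=0$ with limit $-4/(2i)=2i$. Thus $u\mapsto\gamma_{u^2}$ is continuously differentiable on $[0,\sqrt{T}]$.
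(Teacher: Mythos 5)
Your proposal follows essentially the same route as the paper: the concatenation/flow identity (the paper's Lemma \ref{flow}), the computation $\phi^{t+h}_h(0)/h\to -4$ from (C2), the variational factor $D_t=\exp\bigl(\int_0^t d\beta^t_r/\sqrt{\phi^t_r(0)}\bigr)$, and continuity of $t\mapsto D_t$ via a head/tail split of the integral together with a Lemma \ref{analysis-II}-type argument. Two points need repair. First, the claimed expansion $\phi^t_t(w_0)-\phi^t_t(0)=w_0D_t+O(|w_0|^2)$ is stronger than what a Gr\"onwall estimate yields and is not justified (and likely false under (C2) alone): Gr\"onwall only bounds the quotient $(\phi^t_t(w_0)-\phi^t_t(0))/w_0$; to identify its limit as $D_t$ the paper runs a full compactness-plus-uniqueness argument for the family $Z^{t_0,h}_s=(\phi^{t_0}_s(\phi^{t_0+h}_h(0))-\phi^{t_0}_s(0))/\phi^{t_0+h}_h(0)$ --- equicontinuity, Arzel\`a--Ascoli, dominated convergence, and a uniqueness argument as in Proposition \ref{existenceofphi}. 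Only $o(|w_0|)$ along the specific sequence $w_0=\phi^{t+h}_h(0)$ is delivered, and that is all that is needed. Second, the paper deliberately avoids the left derivative by invoking Lemma \ref{lemma-from-lawler-book} (Lawler's Lemma 4.2): a continuous right derivative implies $C^1$. Your ``symmetric argument'' can be made to work, but it would require uniformity in $t-\varepsilon$ of precisely the error estimate you are trying to establish, plus the continuity of $D$ that you prove only afterward; using Lemma \ref{lemma-from-lawler-book} instead is both cleaner and logically tighter, and is well suited to the directional nature of Loewner chains.
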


Before going into the proof, we list some remarks regarding the condition (C2).
     \begin{itemize} 
        \item Note that (C2) is stronger than condition (C1) appearing in the definition of space $BV_{LR}[0,T]$. It can be easily seen that (C2) implies (C1) $\forall t\in (0,T]$ since
\begin{equation}\label{weak_cond}  
        \TV{\beta^t}_s/\sqrt{s}  = \int_0^s \frac{1}{\sqrt s} d \TV{\b^t}_r \leq \int_0^s \frac{1}{\sqrt r} d \TV{\b^t}_r \leq \delta(s).
\end{equation}
In addition, 
\[ \forall s,t \in [0,T],  \quad \TV{U}_{[s, t]}  \leq \d(|t-s|) \sqrt{|t-s|},\]
which shows that the $1/2$-H\"older norm of the driver converges uniformly to $0$ as the length of intervals goes to $0$.

         \item The results of Rohde-Marshall-Lind in \cite{RM}, \cite{Lind} shows that if the $1/2$-H\"older norm of the driver $U$ is less than $4$, the trace  is a $K$-quasi-slit, with $K$ going to $1$ as the H\"older norm approaches $0$. It is also not hard to see that $|U_{t+s} - U_t|/\sqrt{s}$ should converge to $0$ as $s\to 0$ at every $t$ to get a $C^1$ trace. One could ask whether the assumption that the $1/2$-H\"older norm is uniformly small on small intervals, e.g. given by condition (\ref{weak_cond}), is sufficient to imply $C^1$ trace. The answer is negative, and thus we require to put the stronger condition (C2).
         
 In fact, finite energy drivers (studied in \cite{semimartingale} and \cite{wang_loewner_energy}) are examples where the $1/2$-H\"older norm is uniformly small on small intervals but the trace is not necessarily $C^1$.  It is shown in \cite{wang_loewner_energy} that one can turn the trace to the right with angle $\t$, with an increasing driver whose energy is proportional to $\t^2$. By concatenating pieces of Loewner curves turning to the right  during short time with angle $1/n$ ($n = 1, \cdots, \infty$), one constructs a finite energy driver which generates an infinite spiral during finite time (see \cite{LoopEnergy} Section 4.2). This example satisfies (\ref{weak_cond}) but does not generate $C^1$ trace.
         
         We show a concrete driver $U$ where the above slow spiral happens at time $1$: $U$ is constant after time $1$, smooth on $[0,1]$ and for $s < 1/2$:         \[U_{1} - U_{1-s} =\b_s =  \int_{0+}^s \frac{ d r}{\sqrt{r}\log(r)}.\]
         The energy of $U$ on $[1-s, 1]$ is equal to
         \[\int_{0+}^s \dot{\b}_r^2 d r =  \int_{0+}^s \frac{1}{r (\log(r))^2} d r = \left[\frac{-1}{\log(r)}\right]_{0+}^s  \xrightarrow[]{s\to 0} 0.\]
         Thus, the condition (\ref{weak_cond}) holds:
         \[\TV{\beta}_s = |\b_s| = \left|\int_{0+}^s \dot{\b}_r d r\right| \leq \sqrt{s}\int_{0+}^s \dot{\b}_r^2 d r. \]
         This example fails at (C2) since
         \[\int_{0+}^s \frac{1}{\sqrt{r}} d \TV{\beta}_r = -\int_{0+}^s \frac{1}{r \log{r}} d r  = \infty.\]
          \item The result from \cite{carto-wong} shows that if $U \in C^\a$ with $\a > 1/2$, then its trace is in $C^{\a + 1/2} \subset C^1$. It is natural to ask whether $C^\a$ drivers satisfy (C2). Since the condition (C2) is on the total variation of the driver, it cannot cover all the $C^\a$ drivers. However, if the driver is monotonic, then it satisfies also (C2):
         \[ \int_0^{\vare} \frac{d \norm{\b^t}_r}{\sqrt{r}}  = \frac{\b^t_{\vare}}{\sqrt{\vare}} - \int_0^{\vare} \frac{\b^t_r}{r^{3/2}} d r \leq C \vare^{\a - 1/2} + \int_0^{\vare} C r^{\a - 3/2} d r \leq 2C \vare^{\a - 1/2}, \]
         for some constant $C >0$ independent of $t$.
        
     \end{itemize}

Now, we list some lemmas used in the proof of Proposition \ref{thm_(C1)}. We will use Lemma $4.2$ in \cite{lawlerbook} which is recalled below without proof for readers' convenience. 

\begin{lemma}[\cite{lawlerbook}, Lemma $4.2$] \label{lemma-from-lawler-book}
Let $X: [0,T) \to \mathbb{C}$ be a continuous function such that the right derivative \[X_{+}'(t) = \lim\limits_{h \to 0+ } \frac{X_{t+ h} - X_t }{h}\]exists everywhere and that $X_{+}'(t)$ is a continuous function. Then $X$ is continuously differentiable and $X'(t) = X_{+}'(t)$  for $t > 0$.
\end{lemma}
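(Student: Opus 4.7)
The plan is to reduce this to the classical real-analysis fact that a continuous function on an interval whose right derivative vanishes identically must be constant. Given this fact, I would define the auxiliary function
\[Y(t) := X(t) - \int_0^t X'_+(s)\, ds\]
on $[0, T)$. The assumed continuity of $X'_+$ makes the integral a well-defined continuous function of $t$, so $Y$ is continuous. By the fundamental theorem of calculus (which needs only continuity of $X'_+$ at the point $t$), the right derivative of $\int_0^t X'_+(s)\,ds$ equals $X'_+(t)$. Hence $Y'_+(t) = X'_+(t) - X'_+(t) = 0$ identically on $[0, T)$. Assuming the classical fact, $Y$ is constant, equal to $Y(0) = X(0)$, so
\[X(t) = X(0) + \int_0^t X'_+(s)\, ds,\]
which, together with continuity of $X'_+$, immediately gives $X \in C^1([0,T))$ with $X'(t) = X'_+(t)$.

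The core of the argument is therefore the classical lemma: any continuous $Y: [0, T) \to \mathbb{C}$ with $Y'_+ \equiv 0$ is constant. Splitting into real and imaginary parts, it suffices to treat the real-valued case. Fix $0 \leq a < b < T$ and $\varepsilon > 0$, and consider
\[g(x) := Y(x) - Y(a) - \varepsilon(x - a).\]
Set $c := \sup\{x \in [a, b] : g(x) \leq 0\}$, which is well-defined because $g(a) = 0$. By continuity of $g$ one has $g(c) \leq 0$. If $c < b$, then $g'_+(c) = -\varepsilon < 0$ would force $g(c + h) < g(c) \leq 0$ for all sufficiently small $h > 0$, contradicting the maximality of $c$. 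Hence $c = b$, so $g(b) \leq 0$, i.e. $Y(b) - Y(a) \leq \varepsilon(b - a)$. Letting $\varepsilon \to 0^{+}$ gives $Y(b) \leq Y(a)$, and applying the same argument to $-Y$ yields the reverse inequality, so $Y(b) = Y(a)$. Since $a, b$ are arbitrary, $Y$ is constant.

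The only genuinely delicate step is this supremum argument, which is a one-sided variant of the mean value theorem: with only the right derivative at our disposal, the standard MVT is unavailable, and one must exploit compactness of $[a,b]$ together with the definition of $Y'_+$ to rule out the supremum being interior. Everything else (the definition of $Y$, the use of the fundamental theorem of calculus, and the passage from constancy of $Y$ to $C^1$-regularity of $X$) is bookkeeping once the classical constancy lemma is in place.
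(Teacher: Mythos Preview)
Your proof is correct; the reduction to the constancy lemma via $Y(t)=X(t)-\int_0^t X'_+(s)\,ds$ and the one-sided supremum argument are both sound. Note that the paper does not actually prove this lemma: it is quoted verbatim from \cite{lawlerbook} and explicitly ``recalled below without proof for readers' convenience,'' so there is no in-paper argument to compare against.
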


In view of the above lemma, establishing the right derivative turns out be relatively simpler to work with because of the directional nature of Loewner chains which is also reflected in the following lemma. 

Recall the definition of curve $\phi_s^t(w)$ as the solution of equation \eqref{eqn-of-phi} and \eqref{w=0-new} with $\phi_0^t(w) = w \in \mathbb{C}\setminus (0, \infty)$. 

The condition (C2) and Lemma \ref{uniquebranch} imply in particular that for all $0<s\leq t \leq T$,
\begin{equation}\label{lowerbound_im} 2\sqrt{s} \geq \Im \left(\sqrt{\phi^t_s (0)}\right) \geq \sqrt{4-\d(s)^2} \sqrt{s} \geq \sqrt{4-c^2} \sqrt{s} =: C\sqrt{s}.
\end{equation}

\begin{lemma}[Flow Property]\label{flow} If $U \in BV_{LR}[0,T]$, then for $s, t \in [0,T)$, $s  \le t$ and $h\geq 0$, 
\[ \phi_{s+h}^{t+h}(0) = \phi_s^t(\phi_h^{t+h}(0)).
\]

\end{lemma}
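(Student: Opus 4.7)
The plan is to treat both sides as values of curves and reduce the flow identity to the uniqueness of the ODE \eqref{eqn-of-phi} when the starting point lies in $\mathbb{C}\setminus[0,\infty)$, which was already invoked at the end of the proof of Proposition \ref{existenceofphi}. The case $h=0$ is immediate from $\phi_0^t(0)=0$, so assume $h>0$. I define the auxiliary curve
\[ \psi_r := \phi_{r+h}^{t+h}(0), \qquad r \in [0,s], \]
so that $\psi_0 = \phi_h^{t+h}(0)$ and $\psi_s$ equals the left-hand side of the flow identity. By Lemma \ref{uniquebranch} applied to $\phi^{t+h}$, we have $\psi_r \in \mathbb{C}\setminus[0,\infty)$ for every $r\in[0,s]$ since $r+h>0$; in particular $\sqrt{\psi_r}$ is unambiguous and $\psi_0 \in \mathbb{C}\setminus[0,\infty)$.

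Next I will show that $\psi$ satisfies equation \eqref{eqn-of-phi} driven by $\beta^t$ with initial value $\psi_0$. Starting from the integral form of Proposition \ref{existenceofphi} applied to $\phi^{t+h}$, subtracting the values at times $r+h$ and $h$ yields
\[ \psi_r - \psi_0 \;=\; 2\int_h^{r+h} \sqrt{\phi_u^{t+h}(0)}\, d\beta_u^{t+h} \;-\; 4r \;=\; 2\int_0^r \sqrt{\psi_v}\, d\,\beta^{t+h}_{v+h} \;-\; 4r, \]
after the change of variables $u=v+h$. The key observation is then
\[ \beta^{t+h}_{v+h} - \beta^t_v = (U_{t+h}-U_{t-v}) - (U_t-U_{t-v}) = U_{t+h}-U_t, \]
which is constant in $v$. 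Hence the Riemann--Stieltjes differentials coincide, $d\,\beta^{t+h}_{v+h} = d\beta^t_v$, and $\psi$ satisfies
\[ \psi_r = \psi_0 + 2\int_0^r \sqrt{\psi_v}\, d\beta^t_v - 4r, \quad r\in[0,s]. \]

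Finally, the curve $r\mapsto \phi^t_r(\psi_0)$ satisfies the same integral equation with the same initial value $\psi_0\in\mathbb{C}\setminus[0,\infty)$, and both $\psi$ and $\phi^t_\cdot(\psi_0)$ remain in $\mathbb{C}\setminus[0,\infty)$ throughout $[0,s]$. Since the vector field $w\mapsto 2\sqrt{w}$ is smooth on this open set and $\beta^t$ has finite total variation, a standard Gronwall-type argument (the same uniqueness for starting points in $\mathbb{C}\setminus[0,\infty)$ used at the end of the proof of Proposition \ref{existenceofphi}) gives $\psi_r = \phi^t_r(\psi_0)$ for all $r\in[0,s]$. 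Setting $r=s$ yields the claimed identity. The only nontrivial point in the argument is the bookkeeping that identifies $d\beta^{t+h}_{\cdot+h}$ with $d\beta^t_\cdot$; once that is in hand, everything reduces to a uniqueness statement that has already been used in the paper.
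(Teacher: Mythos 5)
Your proof is correct and follows essentially the same route as the paper's: rewrite $\phi^{t+h}_{\cdot+h}(0)$ as a solution of equation \eqref{eqn-of-phi} with initial condition $\phi_h^{t+h}(0)$ and conclude by uniqueness. You spell out two points the paper leaves implicit — the identity $d\beta^{t+h}_{\cdot+h}=d\beta^t_\cdot$ (via the cancellation $\beta^{t+h}_{v+h}-\beta^t_v=U_{t+h}-U_t$) and the fact that for $h>0$ the initial point $\phi_h^{t+h}(0)$ lies in $\mathbb{C}\setminus[0,\infty)$ so the ordinary uniqueness applies — but the underlying argument is identical.
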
  

\begin{proof}
Note that  
\[ \phi_{s+h}^{t+h}(0) = \phi_h^{t+h}(0) + 2\int_h^{s+h}\sqrt{\phi_{r}^{t+h}(0)}d\beta_r^{t+h} - 4s, \]
which implies that $s \mapsto \phi_{s+h}^{t+h}(0) $ is the solution of equation \eqref{eqn-of-phi} with the initial condition $w = \phi_h^{t+h}(0) $. Since equation \eqref{eqn-of-phi} has a unique solution, we conclude that $\phi_{s+h}^{t+h} (0)= \phi_s^t(\phi_h^{t+h}(0))$.
\end{proof}

\begin{proof}[Proof of Proposition \ref{thm_(C1)}]

We first establish the right derivative of curve $ \theta_t = \phi_t^t(0)$. Note that 
\[ \phi_h^{t+h}(0)= 2 \int_0^h \sqrt{\phi_r^{t+h}(0)}d\beta_r^{t+h} - 4h.\]
Since $|\Re \sqrt{\phi_r^{t+h}(0)}| \leq \TV{\beta^{t+h}}_r$ and $\Im \sqrt{\phi_r^{t+h}(0)} \leq 2 \sqrt{r}$, using condition (C2), we easily see that 
\[\lim\limits_{h \to 0+ } \phi_h^{t+h}(0)/h = -4 \]
This implies $\theta_{+}'(0) = -4$. For differentiability at $t_0 >0$, we will use Lemma \ref{flow}. Consider the curves \[ Z_s^{t_0, h} := \frac{\phi_{s+h}^{t_0+ h}(0)- \phi_s^{t_0}(0)}{\phi_h^{t_0 + h}(0)}=\frac{\phi_s^{t_0}(\phi_h^{t_0 + h}(0))- \phi_s^{t_0}(0)}{\phi_h^{t_0 + h}(0)}.\] 

 By (\ref{lowerbound_im}), one has
\[\Im \left(\sqrt{\phi_s^{t_0}(0)}\right) \geq C \sqrt{s}\]
and similarly
\[ \Im \left(\sqrt{\phi_s^{t_0}(\phi_h^{t_0 + h}(0))}\right) = \Im \left(\sqrt{\phi_{s+h}^{t_0 + h}(0)}\right)  \geq C \sqrt{s+ h}.\]

We claim that the family$\{Z^{t_0, h}\}_{ h>0}$ is equicontinuous for $h$ small enough. To see that, note 
\[ Z_v^{t_0, h} - Z_u^{t_0, h} = 2 \int_u^v \frac{Z_r^{t_0, h}}{\sqrt{\phi_r^{t_0}(\phi_h^{t_0 + h}(0))} +  \sqrt{\phi_r^{t_0}(0)}}d\beta_r^{t_0}\,.\]
Since the condition (C2) holds, Gronwall's inequality implies that family $\{Z^{t_0, h}\}$ is bounded, and thus its equicontinuity easily follows. Also, it follows from dominated convergence theorem that if $Z^{t_0}$ is any subsequential limit of $Z^{t_0, h}$ as $h \to 0+$, then 
\[Z_s^{t_0} = 1 + \int_0^s \frac{Z_r^{t_0}}{\sqrt{\phi_r^{t_0}(0)}}d\beta_r^{t_0}.\] 
Again using (C2) and similar proof as in Proposition \ref{existenceofphi}, we conclude that above equation has a unique solution, and thus $Z^{t_0, h}$ converges uniformly to $Z^{t_0}$. In fact, we can also write $Z^{t_0}$ in a closed form as 
\[ Z_s^{t_0} = \exp\biggl(\int_0^s\frac{1}{\sqrt{\phi_r^{t_0}(0)}}d\beta_r^{t_0}\biggr).\]

Then,
\[ \theta_{+}' (t_0) = \lim\limits_{h \to 0+}\frac{\phi_{t_0+h}^{t_0+ h}(0)- \phi_{t_0}^{t_0}(0)}{\phi_h^{t_0 + h}(0)}\frac{\phi_h^{t_0 + h}(0)}{h} 
 =\lim\limits_{h \to 0+}Z_{t_0}^{t_0, h} \frac{\phi_h^{t_0 + h}(0)}{h} = -4 \exp\biggl(\int_0^{t_0}\frac{d \beta_r^{t_0}}{\sqrt{\phi_r^{t_0}}}\biggr) \,.\]

Since $\theta_{+}'(0) = -4$, (C2) and (\ref{lowerbound_im}) imply that $\theta_{+}'$ is continuous at $t_0 =0$. For continuity at $t_0 > 0 $, let $t \in (t_0/2, 2t_0)$. Note that 
\[ \int_0^{t}\frac{1}{\sqrt{\phi_r^{t}}}d\beta_r^{t} = \int_0^{t_0} \frac{1}{\sqrt{\alpha_r^{t}}} d\beta_{r t/t_0}^t = \int_0^{\epsilon} \frac{1}{\sqrt{\alpha_r^{t}}} d\beta_{r t/t_0}^t + \int_{\epsilon}^{t_0} \frac{1}{\sqrt{\alpha_r^{t}}} d\beta_{r t/t_0}^t , \]
where $\alpha_s^t = \phi_{s t/t_0}^t$. Using again (\ref{lowerbound_im}), we see that $\a_{\vare}^t$ is uniformly bounded away from $0$ for each fixed $\epsilon >0$. Together with the proof in Proposition \ref{continuity in t}, we see that $1/\sqrt{\alpha_s^{t}}$ converges uniformly to $1/ \sqrt{\phi_r^{t_0}}$ on $[\epsilon,t_0]$ for any $\epsilon >0$. Thus, Lemma \ref{analysis-II} implies 
\[ \lim\limits_{t \to t_0 } \int_{\epsilon}^{t_0} \frac{1}{\sqrt{\alpha_r^{t}}} d\beta_{r t/t_0}^t = \int_{\epsilon}^{t_0} \frac{1}{\sqrt{\phi_r^{t_0}}} d\beta_{r}^{t_0}.\]
Since $\epsilon$ is arbitrary, using condition (C2), we conclude that $\theta_{+}'$ is continuous at $t_0 >0$ as well. Finally, Lemma \ref{lemma-from-lawler-book} implies that $\theta$ is continuously differentiable, which also implies $t \mapsto \gamma_{t^2}$ is
continuously differentiable. 
\end{proof}

\bibliographystyle{plain}

\begin{thebibliography}{99}





\bibitem{DeBranges1985}
Louis De~Branges.
\newblock A proof of the {Bieberbach} conjecture.
\newblock {\em Acta Math.}, \textbf{154}, 1-2, 1985.

\bibitem{duren1977}
Peter Duren.
\newblock Coefficients of univalent functions.
\newblock {\em Bull. Amer. Math. Soc.}, \textbf{83}, Number 5, 1977.


\bibitem{semimartingale}
Peter Friz and Atul Shekhar.
\newblock { On the existence of SLE trace: finite energy drivers and non constant $\k$}.
\newblock {\em Probab. Theory Relat. Fields}, \textbf{169}, 1-2, 2017.


\bibitem{hayman1994multivalent}
Walter Hayman.
\newblock {Multivalent functions}.
\newblock {\em Cambridge Univ. Press}, 1994.



\bibitem{rtz}
Huy Tran, Steffen Rohde, and Michel Zinsmeister.
  \newblock  {The Loewner equation and Lipschitz graphs}.
   \newblock To appear in  {\em  Rev. Mat. Iberoam.}.





\bibitem{knk}
Leo Kadanoff, Wouter Kager, and Bernard Nienhuis.
\newblock {Exact solutions for Loewner evolutions}.
\newblock {\em J. Stat. Phys.}, 2004.



\bibitem{lawlerbook}
Gregory Lawler.
  \newblock {Conformally Invariant Processes in the Plane}.
  \newblock {\em Amer. Math. Soc.}, 2008.


%

\bibitem{LMR10}
Joan Lind, Donald Marshall, and Steffen Rohde.
\newblock {Collisions and spirals of the Loewner traces}.
\newblock {\em Duke Math. J.},  \textbf{154}, Number 3, 2010

\bibitem{huy-lind}
Joan Lind and Huy Tran.
\newblock {Regularity of {L}oewner Curves}.
\newblock {\em Indiana Univ. Math. J.}, \textbf{65}, 2016




\bibitem{Lind}
Joan Lind.
\newblock {A sharp condition for the {L}oewner equation to generate slits}.
\newblock {\em Ann. Acad. Sci. Fenn. Math}, \textbf{30}, Number 1, 2005.

\bibitem{Loewner1923}
Karl {Loewner}.
\newblock Untersuchungen \"uber schlichte konforme abbildungen des
  einheitskreises. i.
\newblock {\em Math. Ann.}, \textbf{89}, 1-2, 1923.



\bibitem{RM}
Donald Marshall and Steffen Rohde.
  \newblock {The {L}oewner Differential Equation and Slit Mappings}.
  \newblock {\em J. Amer. Math. Soc.}, \textbf{18}, Number 4, 2005.
 

\bibitem{bmbook}
Peter M\"orters and Yuval Peres.
 \newblock {Brownian motion},
\newblock {\em Cambridge Univ. Press}, 2010.


\bibitem{BasicSLE}
Steffen Rohde and Oded Schramm.
   \newblock {Basic properties of SLE.}
  \newblock {\em Ann. Math. 2nd Series}, \textbf{161}, Number 2, 2005.
  
  \bibitem{LoopEnergy}
  Steffen Rohde and Yilin Wang.
  \newblock{The Loewner energy of loops and regularity of driving functions.}
  \newblock{\em Preprint}, 2017.
  
%
%


\bibitem{NikeSun-Scott}
Scott Sheffield and Nike Sun.
\newblock {Strong path convergence from Loewner drivng function convergence}.
\newblock {\em  Ann. Probab.}, \textbf{40}, Number 2, 2012.


\bibitem{EE01} Clifford J. Earle and Adam Lawrence Epstein.
\newblock {Quasiconformal variation of slit domains}
\newblock {\em Proc. Amer. Math. Soc., \textbf{129}, 11, 2001}.




\bibitem{wang_loewner_energy}
  Yilin Wang.
 \newblock {The energy of a deterministic Loewner chain: Reversibility and interpretation via $\mbox{SLE}_{0+}$}.
 \newblock To appear in  {\em J. Eur. Math. Soc.}.


\bibitem{carto-wong}
Carto Wong.
\newblock {Smoothness of {L}oewner Slits},
\newblock {\em Trans. Amer. Math. Soc.}, \textbf{366}, 2014.




















\end{thebibliography}

\end{document}